\theoremstyle{plain}
\newtheorem{thm}{Theorem}[section]
\newtheorem{prop}[thm]{Proposition}
\newtheorem{lemma}[thm]{Lemma}
\newtheorem{cor}[thm]{Corollary}
\theoremstyle{definition}
\newtheorem{defn}{Definition}
\theoremstyle{remark}
\newtheorem{remark}[thm]{Remark}
\newcommand{\bR}{{\mathbb{R}}}
\newcommand{\bZ}{{\mathbb{Z}}}
\newcommand{\bT}{{\mathbb{T}}}
\newcommand{\ttimes}{{\widetilde{\times}}}
\title{Involutions on tori with codimension-one fixed point set}
\subjclass[2000]{Primary 57S25, 57S17. Secondary 57R67}
\keywords{Smith theory, torus, involution, fixed point set }
\author{Allan L. Edmonds}
\address{Department of Mathematics, Indiana University, Bloomington, IN 47405}
\email{edmonds@indiana.edu}
\date{}
\begin{document}
\begin{abstract}
The standard P. A. Smith theory of $p$-group actions on spheres, disks, and euclidean spaces is extended to the case of $p$-group actions on tori (i.e., products of circles) and coupled with topological surgery theory to give a complete topological classification, valid in all dimensions, of the locally linear, orientation-reversing, involutions on tori with fixed point set of codimension one.
\end{abstract}
\maketitle
\section{Introduction}We extend the standard P.~A.~Smith theory of $p$-group actions on spheres, disks, and euclidean spaces to the case of $p$-group actions on tori  $T^{n}=S^{1}\times S^{1}\times \dots \times S^{1}$ ($n$ factors). Then we  apply the topological surgery machine to give a complete topological classification  of locally linear actions of the group $C_{2}$ order $2$ with non-connected, codimension-one
 fixed point set.

The simplest standard model of such an action of $C_{2}$ is the action obtained as the cartesian product of the trivial action on $T^{n-1}$ with the action on the circle $S^{1}$ fixing two points. Its fixed point set consists of two copies of $T^{n-1}$, which together separate $T^{n}$ into two copies of $T^{n-1}\times I$. The action of the generating involution may then be described as the map of the double  $D(T^{n-1}\times I)=T^{n-1}\times I \cup T^{n-1}\times I$ (identified along their common boundaries by the identity) that interchanges the two summands. Another model action is that where the generator interchanges two coordinates of $T^{n}$, fixing a single copy of $T^{n-1}$. In this case the orbit space of the action can be described as the non-orientable, or twisted, $I$-bundle over $T^{n-1}$. Such an $I$-bundle is determined by an epimorphism $\pi_{1}(T^{n-1})\to \bZ_{2}$, and any two such twisted $I$-bundles are equivalent, allowing homemorphisms of the base torus.  By analogy with terminology in the topology of surfaces, we call such a twisted $I$-bundle a \emph{M\"obius band}.

We will show that for a general locally linear involution on $T^{n}$ with fixed point set of codimension-one, the fixed point set must consist either one or two $\bZ[\bZ^{n-1}]$-homology $(n-1)$-tori. We will  refer to the number of components of the fixed point set as the \emph{Type} of the action.

Then we give a complete analysis of the case of two components, showing that any two $\bZ[\bZ^{n-1}]$-homology $(n-1)$-tori arise as the fixed point set of a Type 2 action, and that two such actions with the same fixed point set must be equivalent. Similarly any single $\bZ[\bZ^{n-1}]$-homology $(n-1)$-torus is the fixed point set of a Type 1 actionion $T^{n}$, and any two Type 1 actions with the same fixed point set are equivalent.

\subsection*{Acknowledgement} Thanks to  Michal Sadowski for pointing  out an error in an earlier version of this work.

\section{Smith theory for $p$-group actions on tori}
The most basic P.~A.  Smith theory, as described, for example, by G. Bredon, \cite{Bredon1972}, Chapter 3, implies that the fixed point set of a $p$-group acting on euclidean space is $\bZ_{p}$-acyclic. We apply the technique of lifting a group action to the universal covering space, perhaps first used by P. Conner and D. Montgomery \cite{ConnerMontgomery1959} and heavily exploited by Conner and F. Raymond \cite{ConnerRaymond1970}.
\begin{thm}[Homology torus fixed set of constant dimension]
If a finite $p$-group $G$ acts on the $n$-torus, then each component of the fixed point set has the mod $p$ homology of a $k$-torus for some $k$, and, in fact, the $\bZ_{p}[\bZ^{k}]$-homology of $T^{k}$. Moreover, all components of the fixed point set have the same dimension.
\end{thm}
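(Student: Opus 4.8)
The engine of the proof is the classical device of lifting the action to the universal cover $p\colon\bR^{n}\to T^{n}$, as in \cite{ConnerMontgomery1959,ConnerRaymond1970}. Fix a component $C$ of the fixed set $F=(T^{n})^{G}$, a point $x_{0}\in C$, and a lift $\tilde x_{0}\in\bR^{n}$. Each $g\in G$ fixes $x_{0}$, hence has a unique lift $\tilde g$ to $\bR^{n}$ with $\tilde g(\tilde x_{0})=\tilde x_{0}$; since $\tilde g\tilde h$ again fixes $\tilde x_{0}$, uniqueness of lifts gives $\widetilde{gh}=\tilde g\tilde h$, so $\tilde G=\{\tilde g:g\in G\}$ is a group isomorphic to $G$ acting on $\bR^{n}$. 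I would first check that $\tilde G$ fixes pointwise the component $\tilde C$ of $p^{-1}(C)$ containing $\tilde x_{0}$: on $\tilde C$ each $\tilde g$ is a lift of $\mathrm{id}_{C}$, hence a deck transformation of the covering $p\colon\tilde C\to C$ fixing $\tilde x_{0}$, hence the identity. Conversely the image of any $\tilde G$-fixed point lies in $F$ and is joined to $x_{0}$ inside $F$, so in fact $\tilde C=(\bR^{n})^{\tilde G}$.

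Now $\tilde G$ is a finite $p$-group acting on euclidean space, so the basic Smith theory recalled above shows $\tilde C=(\bR^{n})^{\tilde G}$ is nonempty, connected, and $\bZ_{p}$-acyclic; by \cite{Bredon1972} it is moreover a $\bZ_{p}$-cohomology manifold without boundary, of some dimension $\delta$. I would then analyse the covering $p\colon\tilde C\to C$: its group of deck transformations is $\Lambda=\{v\in\bZ^{n}:v+\tilde C=\tilde C\}\le\bZ^{n}$, free abelian of some rank $k$ and acting freely and properly discontinuously, and a direct check identifies $\Lambda$ with the image of $\pi_{1}(C)\to\pi_{1}(T^{n})=\bZ^{n}$ and shows $C\cong\tilde C/\Lambda$. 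Because the $\bZ_{p}[\Lambda]$-chain complex of $\tilde C$ is acyclic with trivial $H_{0}$, it is quasi-isomorphic to $\bZ_{p}$, so $H_{*}(C;M)\cong H_{*}(\Lambda;M)$ for every coefficient module $M$; with $M=\bZ_{p}$ this gives $H_{*}(C;\bZ_{p})\cong H_{*}(T^{k};\bZ_{p})$ and, read off the cover $\tilde C\to C$, precisely the statement that $C$ has the $\bZ_{p}[\bZ^{k}]$-homology of $T^{k}$. This establishes the first assertion.

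It remains to see that $\delta=k$ and that $k$ is independent of $C$. Since $C$ is closed in the compact space $T^{n}$ it is compact, hence a \emph{closed} $\bZ_{p}$-cohomology $\delta$-manifold which is simultaneously a $\bZ_{p}$-homology $k$-torus. Poincar\'e duality then forces $\delta=k$: the nonvanishing of $H^{0}(C;\bZ_{p})$ and of $H_{k}(C;\bZ_{p})$ give $\delta\le k$ and $\delta\ge k$ respectively, once one knows $C$ is $\bZ_{p}$-orientable; and a nontrivial orientation character would kill the class responsible for $\delta\le k$, so orientability is itself forced by duality (it is automatic when $p=2$, and for odd $p$ it uses that $2$ is invertible mod $p$).

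The main obstacle is the last assertion: $k=k(C)$ is the same for every component. One inequality is cheap --- $G$ fixes $C$ pointwise, so it acts trivially on $\pi_{1}(C)$ and on its image $\Lambda$, whence $\Lambda\otimes\mathbb{Q}$ is contained in the invariant subspace $V_{0}=(\mathbb{Q}^{n})^{G}$ of the representation $\rho\colon G\to\mathrm{GL}_{n}(\bZ)$ on $\pi_{1}(T^{n})$, so $k(C)\le\dim_{\mathbb{Q}}V_{0}$ for \emph{every} $C$. The real content is the reverse inequality, that $C$ must wind nontrivially in each direction of $V_{0}$. I would attack it by the same method that drives the whole argument: reduce, via a central subgroup $\bZ_{p}\le G$ and induction on $|G|$, to the case $G=\bZ_{p}$, and there invoke the localization theorem in $\bZ_{p}$-equivariant cohomology to control the Poincar\'e polynomial $\sum_{C}(1+t)^{k(C)}$ of $F$ in terms of $(1+t)^{n}$ and $\rho$; alternatively, for each primitive $v\in\bZ^{n}\cap V_{0}$ one passes to the cyclic cover of $T^{n}$ defined by a $G$-invariant epimorphism $\bZ^{n}\to\bZ$ nonzero on $v$ --- which exists because over $\mathbb{Q}$ the $\rho$-invariants map isomorphically onto the $\rho$-coinvariants --- lifts the action, and runs Smith theory upstairs to conclude $v\in\Lambda\otimes\mathbb{Q}$. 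Either route, this is the step that must exploit the exterior-algebra structure of $H^{*}(T^{n})$ --- precisely what fails for a general manifold, where fixed sets of $p$-group actions need not be equidimensional --- and I expect essentially all the difficulty of the theorem to reside there.
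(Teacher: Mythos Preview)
Your treatment of the first assertion---lifting to $\bR^{n}$, invoking Smith theory to get a $\bZ_{p}$-acyclic $\bZ_{p}$-cohomology manifold $\tilde C$, identifying the deck group $\Lambda\le\bZ^{n}$ of the covering $\tilde C\to C$, and reading off the $\bZ_{p}[\bZ^{k}]$-homology---matches the paper's argument essentially line for line. Your explicit separation of the Smith-theoretic dimension $\delta$ from the rank $k$ of $\Lambda$, and their reconciliation by duality, is a point the paper leaves implicit.

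Where you diverge is the constant-dimension step, and here you have made the problem much harder than it is. You characterize $\Lambda$ as the image of $\pi_{1}(C)\to\pi_{1}(T^{n})$, observe that this lies inside $(\bZ^{n})^{G}$, and then declare the reverse inequality $\mathrm{rank}\,\Lambda\ge\mathrm{rank}\,(\bZ^{n})^{G}$ to be ``the real content,'' proposing equivariant localization or auxiliary cyclic covers to extract it from the exterior-algebra structure of $H^{*}(T^{n})$. But the paper simply observes that $\Lambda=(\bZ^{n})^{G}$ exactly, and this is immediate from your own setup: if $v\in(\bZ^{n})^{G}$ then, by definition of the induced $G$-action on $\pi_{1}$, the deck transformation $v$ commutes with every lift $\tilde g$, so $\tilde g(v\cdot\tilde x_{0})=v\cdot\tilde g(\tilde x_{0})=v\cdot\tilde x_{0}$, whence $v\cdot\tilde x_{0}\in(\bR^{n})^{\tilde G}=\tilde C$ and thus $v\in\Lambda$. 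Since the $G$-module structure on the abelian group $\pi_{1}(T^{n})$ is basepoint-independent, so is $(\bZ^{n})^{G}$, and hence so is $k=\mathrm{rank}\,\Lambda$. Your proposed detours may be salvageable, but they are aimed at a difficulty that is not present; the constant-dimension claim is a one-line consequence of covering-space bookkeeping you already have in hand.
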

\begin{proof}
Let $x$ be a point of the fixed point set $F$.  We may lift the action of $G$ to a covering action on $\mathbb{R}^{n}$ uniquely determined by the requirement that it fix a chosen point lying over the point $x\in F$. By Smith theory, the fixed point set $\widetilde{F}$ of $G$ acting on $\mathbb{R}^{n}$ is a $\bZ_{p}$-acyclic $\bZ_{p}$-homology $k$-manifold for some $k\le n$. Moreover, $\widetilde{F}$ projects as a covering map into $F$ with its image coinciding with the component $F_{x}$ of $F$ in which $x$ lies. The group of deck transformations of the regular covering $\widetilde{F}\to F_{x}$ consists of the subgroup of the group $\bZ^{n}$ of deck transformations for $\bR^{n}\to T^{n}$ that leave $\widetilde{F}$ invariant. In particular it is a free abelian group of rank $k$ for some $k\le n$. The spectral sequence of the covering $\widetilde{F}\to F_{x}$ (with $\bZ_{p}$-coefficients) shows that $H_{*}(F_{x};\bZ_{p})\approx H_{*}(T^{k};\bZ_{p})$, and that, indeed, almost by definition, $H_{*}(F_{x};\bZ_{p}[\bZ^{k}])\approx H_{*}(T^{k};\bZ_{p}[\bZ^{k})$.

It remains to see that all components of the fixed point set have the same dimension.
To this end, consider again the covering $\widetilde{F}\to F_{x}$ arising by choosing a fixed point $x$ and lifting the group action to $\bR^{n}$, fixing a chosen point over $x$. One can identify the group of deck transformations with the invariant elements $\pi_{1}(T^{n},x)^{G}$. The dimension of $F_{x}$ then is the mod $p$ cohomological dimension of this group. (See Brown \cite{Brown1994}, for example, for information about cohomological dimension.) But, since $\pi_{1}(T^{n},x)$ is abelian, the latter group, as well as the action of $G$ on it, is independent of the choice of fixed base point. The result follows.
\end{proof}
\begin{cor}[Nontrivial homology]
If a finite $p$-group acts on an $n$-torus, then each $k$-dimensional component $F_{x}$ of the fixed point set carries the non-zero mod $p$ homology class of a standard $k$-sub-torus of $T^{n}$.
\end{cor}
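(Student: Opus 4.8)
The plan is to factor the inclusion $F_x\hookrightarrow T^n$, up to homotopy, through an honest sub-torus, and then to transport a generator of $H_k(F_x;\bZ_p)$ along that factorization, using the $\bZ_p$-acyclicity of the lifted fixed set $\widetilde F$ from the proof of Theorem~2.1. Recall from that proof that the deck group of the regular covering $\widetilde F\to F_x$ is $L=(\bZ^n)^G$, the subgroup of $G$-fixed elements of $\pi_1(T^n)=\bZ^n$, free abelian of rank $k=\dim F_x$. The key elementary observation is that $L$ is a \emph{primitive} subgroup of $\bZ^n$: if $mv\in L$ for some $v\in\bZ^n$ and $m\ge1$, then $m(gv-v)=0$ for every $g\in G$, so $gv=v$ since $\bZ^n$ is torsion-free, i.e.\ $v\in L$. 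Hence $L$ is a direct summand of $\bZ^n$; fixing a $\bZ$-basis $e_1,\dots,e_k$ of $L$ and a complement $\bZ^n=L\oplus L''$, the real span $V=L\otimes\bR$ meets $\bZ^n$ exactly in $L$, so $T'=V/L$ is a sub-torus of $T^n$, standard in the sense that some automorphism of $T^n$ carries $e_1,\dots,e_k$ to coordinate generators, with fundamental class $e_1\wedge\cdots\wedge e_k$ in $H_k(T^n;\bZ)=\Lambda^k\bZ^n$.

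Next I would build the factorization geometrically. Since $\widetilde F\subseteq\bR^n$ is invariant under the translation group $L$, taking $L$-quotients embeds $F_x=\widetilde F/L$ into $\bR^n/L$, and following with the covering $\bR^n/L\to\bR^n/\bZ^n=T^n$ recovers the inclusion of $F_x$. Using the splitting $\bZ^n=L\oplus L''$ one identifies $\bR^n/L$ with $T'\times(L''\otimes\bR)$, which deformation retracts onto the slice $T'\times\{0\}$, and the covering $\bR^n/L\to T^n$ is homotopic to this retraction followed by the standard inclusion $T'\hookrightarrow T^n$. Thus the inclusion $F_x\hookrightarrow T^n$ is homotopic to a composite $F_x\hookrightarrow\bR^n/L\simeq T'\hookrightarrow T^n$.

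Finally I would check each of the two maps in this composite does what is wanted on $H_k(-;\bZ_p)$. The inclusion $\widetilde F\hookrightarrow\bR^n$ is an $L$-equivariant map between $\bZ_p$-acyclic spaces, so on Borel constructions (equivalently, comparing the Cartan--Leray spectral sequences, exactly as in Theorem~2.1) it induces an isomorphism $H_*(F_x;\bZ_p)\cong H_*(\bR^n/L;\bZ_p)$; in particular the image of a generator $\mu$ of $H_k(F_x;\bZ_p)\cong\bZ_p$ generates $H_k(\bR^n/L;\bZ_p)\cong H_k(T';\bZ_p)\cong\bZ_p$, hence, for a suitable choice of $\mu$, equals the mod $p$ reduction of $[T']=e_1\wedge\cdots\wedge e_k$. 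On the other side, $T'\hookrightarrow T^n$ induces on $H_k$ the inclusion $\Lambda^k(L\otimes\bZ_p)\hookrightarrow\Lambda^k(\bZ^n\otimes\bZ_p)$, which is injective because $e_1,\dots,e_k$ remain linearly independent mod $p$. Composing, the image of $\mu$ in $H_k(T^n;\bZ_p)$ is exactly the nonzero mod $p$ homology class of the standard sub-torus $T'$. Apart from invoking the structural facts about $\widetilde F$ supplied by Theorem~2.1, the only point needing care is the naturality in the spectral-sequence collapse --- that the map of Borel constructions really induces the identity on $E_2=H_*(L;\bZ_p)$ --- and this is where I expect the only friction; the primitivity of $(\bZ^n)^G$, the retraction onto $T'$, and the injectivity of $\Lambda^k$ on a direct summand are all formal.
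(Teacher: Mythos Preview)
Your proof is correct and follows essentially the same route as the paper: both identify the deck group of $\widetilde F\to F_x$ with the summand $L=(\bZ^n)^G$, factor the inclusion $F_x\hookrightarrow T^n$ up to homotopy through the classifying map $F_x\to T^k\cong T'$ of that covering, and use the $\bZ_p$-acyclicity of $\widetilde F$ to see this classifying map is a mod $p$ homology isomorphism. Your version simply makes explicit several points the paper leaves terse---the primitivity argument for why $L$ is a summand, the geometric realization of the classifying map via $\bR^n/L$, and the injectivity of $\Lambda^k$ on a direct summand.
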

\begin{proof}
As above lift the given action to one on the universal covering $\bR^{n}$. The image of $\pi_{1}(F_{x},x)$ in $\pi_{1}(T^{n},x)$ must be $\pi_{1}(T^{n},x)^{G}$, which (being a fixed point set) is a direct summand of $\pi_{1}(T^{n},x)=\bZ^{n}$. Therefore each component $F_{x}$ carries the nontrivial mod $p$ homology class of a standard $k$-sub-torus $T^{k}$, since the covering $\widetilde{F}\to F_{x}$ is classified by a map $F_{x}\to T^{k}$ factorizing the inclusion $F_{x}\to T^{n}$ up to homotopy and inducing an isomorphism $H_{*}(F_{x};\bZ_{p})\to H_{*}(T^{k};\bZ_{p})$.
\end{proof}
Here is an alternative approach to the results of this section. An action of $G$ on $T^{n}$ determines a geometric model action of $G$ on $T^{n}$, which we denote briefly by $\bT^{n}_{G}$, by Lee and Raymond \cite{LeeRaymond1981}. There is then a $G$-map $T^{n}\to \bT^{n}_{G}$ inducing an isomorphism on $\pi_{1}$, as follows from a construction that perhaps goes back to Serre. The best way to see this is by lifting both actions to the universal covers and producing an equivariant map at that level by trivial obstruction theory, using the fact that the model action has contractible fixed point sets.  Then we can apply ordinary relative Smith Theory to the pair $(\bT^{n}_{G},T^{n})$, i.e., to the mapping cylinder relative to the domain, to obtain the desired conclusions.
\section{Involutions}%
Now consider the situation of orientation-reversing actions of the group $C_{2}$ of order two on the $n$-torus $T^{n}$ such that the fixed point set has dimension $n-1$.

As proved above each component $F_{x}$ of the fixed point set has the $\bZ_{2}[\bZ^{n-1}]$-homology of $T^{n-1}$. We will argue that $F_{x}$ is orientable, that there are exactly two components of the fixed point set, and that in fact each component has the $\bZ[\bZ^{n-1}]$-homology of $T^{n-1}$.

\begin{lemma}[Orientability]
If $C_{2}$ acts on the $n$-torus with codimension-one fixed point set, then each component of the fixed point set is orientable.
\end{lemma}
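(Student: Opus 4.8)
The plan is to deduce orientability of a fixed component from its $\bZ_{2}$-cohomology \emph{ring}, which the previous section already pins down. Write $\tau$ for the generating involution and let $F_{x}$ be a component of the fixed set. Since $\tau$ is locally linear, it is locally a reflection near each fixed point, so $F_{x}$ is a closed connected locally flat $(n-1)$-submanifold of $T^{n}$; in particular it is a closed topological manifold, and the case $n\le 1$ is trivial, so assume $n\ge 2$.

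First I would record the cohomology ring. By the Corollary above there is a map $f\colon F_{x}\to T^{n-1}$, factoring the inclusion $F_{x}\hookrightarrow T^{n}$ up to homotopy, that induces an isomorphism on $H_{*}(-;\bZ_{2})$ in all degrees. Dualizing over the field $\bZ_{2}$ (so there is no Ext term) and using that $f^{*}$ is always multiplicative, $f^{*}$ is an isomorphism of graded rings; hence $H^{*}(F_{x};\bZ_{2})\cong H^{*}(T^{n-1};\bZ_{2})$ is the exterior algebra on $n-1$ generators $x_{1},\dots,x_{n-1}$ of degree one. Next, since the total Steenrod square is a ring endomorphism and $\operatorname{Sq}(x_{i})=x_{i}+x_{i}^{2}=x_{i}$ (because $x_{i}^{2}=0$ in an exterior algebra), $\operatorname{Sq}$ is the identity on all of $H^{*}(F_{x};\bZ_{2})$; in particular $\operatorname{Sq}^{1}=0$ throughout. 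Finally I would invoke Wu's formula in degree one, valid for closed topological manifolds: for every $y\in H^{n-2}(F_{x};\bZ_{2})$ one has $\langle w_{1}(F_{x})\cup y,\,[F_{x}]\rangle=\langle\operatorname{Sq}^{1}y,\,[F_{x}]\rangle=0$, where $[F_{x}]\in H_{n-1}(F_{x};\bZ_{2})$ is the mod $2$ fundamental class. Because $\bZ_{2}$-Poincar\'e duality holds for the closed connected manifold $F_{x}$, the cup-product pairing $H^{1}(F_{x};\bZ_{2})\otimes H^{n-2}(F_{x};\bZ_{2})\to H^{n-1}(F_{x};\bZ_{2})\cong\bZ_{2}$ is perfect, so $w_{1}(F_{x})=0$ and $F_{x}$ is orientable.

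I do not expect a serious obstacle once the Corollary is in hand: the remaining steps are standard homotopy theory and manifold topology. As a consistency check and an alternative description, note that since $T^{n}$ is orientable the splitting of the tangent microbundle of $T^{n}$ along $F_{x}$ into tangential and normal parts gives $w_{1}(F_{x})=w_{1}(\nu)$, where $\nu$ is the normal line bundle of $F_{x}$ in $T^{n}$; thus the lemma is equivalent to the assertion that each fixed component is two-sided, and the argument above proves this without analyzing $\nu$ directly. The only genuine input beyond formalities is the identification of $H^{*}(F_{x};\bZ_{2})$ as an \emph{exterior} algebra, i.e.\ that $x_{i}^{2}=0$; this is exactly what the $\bZ_{2}$-homology equivalence $f$ supplies, and it is what rules out, for instance, a Klein bottle (whose $\bZ_{2}$-cohomology has a degree-one class with nonzero square) occurring as a fixed component when $n=3$.
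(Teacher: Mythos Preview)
Your argument is correct and genuinely different from the paper's. The paper argues geometrically in the universal cover: the lifted fixed set $\widetilde{F}\subset\bR^{n}$ is $\bZ_{2}$-acyclic, hence orientable; if $F_{x}$ were nonorientable then some deck transformation in $\bZ^{n-1}$ would have to reverse orientation on $\widetilde{F}$ while preserving it on $\bR^{n}$, hence would swap the two sides of $\widetilde{F}$, and one extracts a contradiction by passing to a quotient in which $F_{x}$ appears as the boundary of an orientable manifold. Your route is purely algebraic: from the Corollary you know $H^{*}(F_{x};\bZ_{2})$ is an exterior algebra on degree-one classes, whence every $x_{i}^{2}=0$, the total Steenrod square is the identity, and Wu's formula forces $w_{1}(F_{x})=v_{1}(F_{x})=0$. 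The paper's proof is more elementary in that it avoids Steenrod operations and Wu classes entirely, relying only on covering-space and separation arguments; on the other hand your argument is cleaner once the cohomology ring is in hand, avoids any delicate reasoning about ``sides'' or quotient boundaries, and makes transparent exactly which feature of the cohomology (the vanishing of squares of degree-one classes) is doing the work---as your Klein-bottle remark illustrates. One small comment: you invoke local linearity to know $F_{x}$ is an honest closed topological manifold so that Wu's formula applies; the paper's proof does not explicitly use this hypothesis, working instead at the level of $\bZ_{2}$-homology manifolds in the cover, though of course local linearity is assumed throughout the paper for the eventual classification.
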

\begin{proof}

Consider the covering $\widetilde{F}\to F_{x}$, with its deck transformation group a summand $\bZ^{n-1}\subset \bZ^{n}$. Note that $\widetilde{F}$, being mod ${2}$ acyclic, is certainly orientable. If $F_{x}$ were non-orientable, then the action of $\bZ^{n-1}$ on $\widetilde{F}$ must reverse orientation. But of course $\bZ^{n-1}$ preserves orientation on all of $\bR^{n}$. thus the action of $\bZ^{n-1}$ interchanges sides of $\widetilde{F}$ in $\bR^{n}$. It follows that $\bR^{n}/\bZ^{n-1}$ is an orientable, non-compact manifold with boundary $F_{x}$. This implies that the boundary, namely $F_{x}$, is also orientable, contradicting the assumption that $F_{x}$ is non-orientable.
\end{proof}
\begin{lemma}[One or two components]
If $C_{2}$ acts on the $n$-torus with codimension-one fixed point set, then the fixed point set contains either one or two components.
\end{lemma}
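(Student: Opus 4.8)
The plan is to combine the structural information already established with the classical Smith inequality for involutions. By Theorem~2.1 and the standing codimension-one hypothesis, every component $F_x$ of the fixed point set $F$ satisfies $H_*(F_x;\bZ_2)\cong H_*(T^{n-1};\bZ_2)$, so $\dim_{\bZ_2}H_*(F_x;\bZ_2)=2^{n-1}$, a number independent of the component. Hence if $F$ has $c$ components then $\dim_{\bZ_2}H_*(F;\bZ_2)=c\cdot 2^{n-1}$, whereas $\dim_{\bZ_2}H_*(T^{n};\bZ_2)=\sum_k\binom{n}{k}=2^{n}$. Now invoke the Smith inequality (Bredon~\cite{Bredon1972}, Chapter~III): for an action of $C_2$ on a finitistic space $X$ with finite total mod-$2$ Betti number one has $\dim_{\bZ_2}H_*(X^{C_2};\bZ_2)\le\dim_{\bZ_2}H_*(X;\bZ_2)$. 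Taking $X=T^n$, which is compact and hence finitistic, gives $c\cdot 2^{n-1}\le 2^{n}$, i.e. $c\le 2$; and since the codimension-one hypothesis presupposes $F\ne\emptyset$, also $c\ge 1$. This proves the lemma. (When $n=1$ one even gets $c=2$, using that the difference in the Smith inequality is always even, but this refinement is not needed.)

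I do not expect a real obstacle here: the substantive work is already in Theorem~2.1, and what remains is only to check that $T^n$ is finitistic, so that the Smith inequality applies in the form quoted, and to be sure one is legitimately summing mod-$2$ Betti numbers over \emph{all} components of $F$, which uses the already-proved facts that $F$ is nonempty and that every component is an $(n-1)$-dimensional $\bZ_2$-homology torus (so none is of lower dimension or has a different total Betti number).

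A variant closer to the covering-space method used above would lift the involution to an involution of $\bR^n$ and identify $\pi_0(F)$ with the set of $\bZ^{n}$-conjugacy classes of involutive lifts of the generator — distinct involutive lifts have disjoint, connected, $\bZ_2$-acyclic fixed point sets in $\bR^n$, and $F$ is the quotient of their union by $\bZ^n$ — hence with $H^1(C_2;\bZ^n)$ for the induced action $A\in\mathrm{GL}_n(\bZ)$ of $C_2$ on $\bZ^n$. The codimension-one hypothesis, via the dimension computation in the proof of Theorem~2.1, forces the $(-1)$-eigenlattice $E=\ker(I+A)$ to have rank one, and since $(A-I)w=-2w$ for $w\in E$ while $(A-I)\bZ^n\subseteq\ker(I+A)$, one gets $2E\subseteq(A-I)\bZ^n\subseteq E$, so that $H^1(C_2;\bZ^n)=E/(A-I)\bZ^n$ has order $1$ or $2$. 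In this second route the one delicate point is the identification of $\pi_0(F)$ with that set of conjugacy classes.
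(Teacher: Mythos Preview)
Your primary argument is correct and is precisely the paper's proof: invoke the Smith inequality $\sum_i\dim_{\bZ_2}H_i(F;\bZ_2)\le\sum_i\dim_{\bZ_2}H_i(T^n;\bZ_2)$ and use (from Theorem~2.1) that each component contributes $2^{n-1}$ to the left-hand side, giving $2^{n-1}b_0(F)\le 2^n$. The paper likewise remarks, without giving details, that component-counting formulas from the literature (Sadowski; Conner--Raymond) furnish an alternative, in the spirit of your covering-space variant via $H^1(C_2;\bZ^n)$.
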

\begin{proof}
By the basic inequality of 
Smith theory
\[
\sum_{i\ge 0}\dim_{\bZ_{2}}H_{i}(F;\bZ_{2})\le \sum_{i\ge 0}\dim_{\bZ_{2}}H_{i}(T^{n};\bZ_{2})
\]
Thus
\[
2^{n-1}b_{0}(F)\le 2^{n}
\]
It follows that there are at most two components. 
\end{proof}
We note that this also follows from more general formulas for the number of components of a fixed point set on tori (see M. Sadowski \cite{Sadowski2006}) or other aspherical manifolds (see Conner and Raymond \cite{ConnerRaymond1972}).

In this case there are regular coverings of each fixed point component $F_{i}$  with deck transformation group isomorphic to $\bZ^{n-1}$, that are $\bZ_{2}$-acyclic. If there are two components, they separate $T^{n}$ into two complementary domains interchanged by the group action. The closure of either complementary domain is homeomorphic to the orbit  space.  If there is only one component of the fixed point set, then it is nonseparating. The orbit space is a nonorientable manifold with boundary $F$, whose interior is covered $2$ to $1$ by the complement of $F$ in $T^{n}$.

The codimension-one
 aspect allows us to do a bit better, gleaning integral, not just mod $2$, information.

\begin{prop}[{$\bZ[\bZ^{n-1}]$-homology}]
If  the group $C_{2}$ of order two acts on the $n$-torus $T^{n}$ such that the fixed point set has dimension $n-1$, then any component $F_{i}$ of the fixed point set has the $\bZ[\bZ^{n-1}]$-homology of $T^{n-1}$. In particular there is a regular covering of $F_{i}$  with group $\bZ^{n-1}$ that is $\bZ$-acyclic. Moreover, the orbit space $W^{n}$ 
 also has the $\bZ[\bZ^{n-1}]$-homology of $T^{n-1}$ and has $\pi_{1}(W)\approx \bZ^{n-1}$.
\end{prop}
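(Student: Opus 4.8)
The plan is to pass to the universal cover $\bR^{n}$ of $T^{n}$ and to exploit the fact that, in codimension one, the relevant coverings are honest quotients by a $C_{2}$-action, so that the mod-$2$ Smith-theoretic information of Section~2 can be promoted to integral information by a transfer argument. Fix $x\in F_{i}$ and lift the involution to $\bR^{n}$ so that the lift $\tau$ fixes a chosen point over $x$; then $\tau^{2}\in\bZ^{n}$ is a translation with a fixed point, hence $\tau^{2}=\mathrm{id}$, and $\tau$ is an honest involution of $\bR^{n}$. Let $\Gamma=\langle\bZ^{n},\tau\rangle\cong\bZ^{n}\rtimes_{A}C_{2}$, where $A\in GL_{n}(\bZ)$ is conjugation by $\tau$ on $\pi_{1}(T^{n})=\bZ^{n}$. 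Since the action reverses orientation, $\det A=-1$; by the results of Section~2 the fixed subgroup $P:=(\bZ^{n})^{\tau}=\ker(A-I)$ is a direct summand of rank $n-1$, so $L:=\ker(A+I)$ is a direct summand of rank one, $L=\bZ\ell$ with $\ell$ primitive, and $\mathrm{im}(A-I)\subseteq L$ has finite index. The fixed set $\widetilde{F}=(\bR^{n})^{\tau}$ is $\bZ_{2}$-acyclic, two-sided (locally $\tau$ is a reflection), and covers $F_{i}$ with deck group $P\cong\bZ^{n-1}$; so it suffices to show that $\widetilde{F}$ is $\bZ$-acyclic.

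Since $\widetilde{F}$ is connected and two-sided in the simply connected $\bR^{n}$, its complement is a union $U_{+}\sqcup U_{-}$ of two open sets interchanged by $\tau$; hence $\bR^{n}/\langle\tau\rangle$ is homeomorphic to $\overline{U_{+}}$, and $\bR^{n}$ is the double of $\overline{U_{+}}$ along $\widetilde{F}$. The standard transfer isomorphism $H^{*}(X/G;R)\cong H^{*}(X;R)^{G}$, applied with $G=C_{2}$, $R=\bZ[1/2]$, $X=\bR^{n}$ (legitimate since $|G|$ is a unit in $R$), shows that $\overline{U_{+}}$ is $\bZ[1/2]$-acyclic; a Mayer--Vietoris argument for the double, using that $\widetilde{F}$ is $\bZ_{2}$-acyclic, shows that $\overline{U_{+}}$ is $\bZ_{2}$-acyclic. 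A space acyclic over both $\bZ[1/2]$ and $\bZ_{2}$ is acyclic over $\bZ$, so $\overline{U_{+}}$ is $\bZ$-acyclic; feeding this back into the Mayer--Vietoris sequence of the double, now with $\bZ$-coefficients, forces $\widetilde{F}$ to be $\bZ$-acyclic. This proves the statement about $F_{i}$, and hence also the orientability asserted earlier.

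For the orbit space $W=\bR^{n}/\Gamma$: every finite subgroup of $\Gamma$ injects into $\Gamma/\bZ^{n}=C_{2}$, so an element of $\Gamma$ has a fixed point in $\bR^{n}$ exactly when it has finite order, and these are precisely the reflections $(m\ell,\tau)$, $m\in\bZ$. Their normal closure $\Gamma_{0}$ is the infinite dihedral group $L\rtimes\langle\tau\rangle$, and by Armstrong's theorem on orbit spaces $\pi_{1}(W)\cong\Gamma/\Gamma_{0}\cong\bZ^{n}/L\cong\bZ^{n-1}$; the universal cover is therefore $\widetilde{W}=\bR^{n}/\Gamma_{0}$. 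Factoring the $\Gamma_{0}$-action through the free subgroup $L$, set $Z=\bR^{n}/L$, a $K(\bZ,1)$, so $Z\simeq S^{1}$; then $\widetilde{W}=Z/\langle\tau\rangle$. The fixed sets of the reflections $(m\ell,\tau)$, $m\in\bZ$, are pairwise disjoint translates of $\widetilde{F}$ arranged in a line and permuted by $L$ with exactly two orbits, so the fixed set of $\tau$ on $Z$ has two components, each $\bZ$-acyclic by the argument above applied to the relevant reflection; and since $\tau$ acts by $-1$ on $H^{1}(Z)\cong\bZ$, its complement in $Z$ has two components interchanged by $\tau$. As before, $\widetilde{W}$ is one half of this splitting: the transfer with $\bZ[1/2]$-coefficients gives $\bZ[1/2]$-acyclicity (the invariants of the sign representation over $\bZ[1/2]$ vanish), a Mayer--Vietoris computation with $\bZ_{2}$-coefficients gives $\bZ_{2}$-acyclicity, and hence $\widetilde{W}$ is $\bZ$-acyclic. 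Thus $H_{*}(W;\bZ[\bZ^{n-1}])\cong H_{*}(\widetilde{W};\bZ)\cong H_{*}(T^{n-1};\bZ[\bZ^{n-1}])$, completing the proof.

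The main obstacle is the passage from mod-$2$ to integral coefficients, which the Smith theory of Section~2 does not by itself supply; the new ingredient is the order-two transfer, available precisely because codimension one makes $\tau$ a reflection and so presents $\widetilde{F}$, and later $\widetilde{W}$, inside a genuine $C_{2}$-quotient. The one genuinely delicate point is the combinatorics of the reflection walls in $\bR^{n}$: one must check that $L$ acts on them with exactly two orbits, so that the fixed set of $\tau$ on $Z$ has two acyclic components and the halving argument runs uniformly---independently of whether $F$ itself has one or two components, which corresponds to whether $\mathrm{im}(A-I)$ equals $L$ or has index two in it.
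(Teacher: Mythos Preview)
Your argument is correct, but it takes a noticeably different route from the paper's at the key step. Where you invoke the transfer isomorphism $H^{*}(\bR^{n}/\langle\tau\rangle;\bZ[1/2])\cong H^{*}(\bR^{n};\bZ[1/2])^{\tau}$ and then combine this $\bZ[1/2]$-acyclicity with a Mayer--Vietoris $\bZ_{2}$-computation to conclude that $\overline{U_{+}}$ is $\bZ$-acyclic, the paper simply observes that the involution itself furnishes a \emph{retraction} $\bR^{n}\to\overline{U_{+}}$ (fold $U_{-}$ onto $U_{+}$ by $\tau$); this immediately yields $\bZ$-acyclicity and simple connectivity of $\overline{U_{+}}$ with no coefficient bookkeeping, and then Mayer--Vietoris gives the $\bZ$-acyclicity of $\widetilde{F}$ directly. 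Your transfer argument is valid (for \v{C}ech cohomology, as the paper's technical note already flags), and the ``$\bZ[1/2]$-acyclic plus $\bZ_{2}$-acyclic implies $\bZ$-acyclic'' step goes through once one tracks both the tensor and the $\mathrm{Tor}$ terms in the universal coefficient sequence; but the folding shortcut makes all of this unnecessary.

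For the orbit space your use of Armstrong's theorem, computing $\pi_{1}(W)\cong\Gamma/\Gamma_{0}\cong\bZ^{n-1}$ and identifying $\widetilde{W}$ with $\bR^{n}/\Gamma_{0}$ for the infinite dihedral group $\Gamma_{0}=L\rtimes\langle\tau\rangle$, gives a pleasantly uniform treatment of the Type~1 and Type~2 cases. The paper instead works directly with the ``strip domains'' between consecutive reflection walls: each closed strip is a retract of $\bR^{n}$, hence contractible, and covers a complementary domain in $T^{n}$; in Type~2 this is already the orbit space, while in Type~1 a short separate argument identifies $\pi_{1}(W)$ as a torsion-free central extension of $\bZ^{n-1}$ by $C_{2}$. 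Your $\widetilde{W}$ is precisely one of these closed strips, so the two pictures coincide. The paper's route avoids Armstrong's theorem and the second transfer computation, and also gets simple connectivity of the strips for free; your route avoids the Type~1/Type~2 case split and makes the group theory of $\Gamma$ explicit. Both are sound.
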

\begin{proof}
As we have seen, one may lift the action of $C_{2}$ to a covering action on $\mathbb{R}^{n}$ whose fixed point set $\widetilde{F}$ covers (one component of) $F\subset T^{n}$. And $\widetilde{F}$  is a $\bZ_{2}$-acyclic $\bZ_{2}$-homology $(n-1)$-manifold, by basic Smith theory, and the group of deck transformations preserving $\widetilde{F}$ is isomorphic to $\bZ^{n-1}$ and a summand of $\pi_{1}(T^{n})$. By duality such a mod $2$ hyperplane $\widetilde{F}$ separates $\bR^{n}$ into two components $U$ and $V$.  The involution in $C_{2}$ allows one to define retractions of $\bR^{n}$ onto the closures $\overline{U}$ and $\overline{V}$ of the complementary domains.  It follows that $\overline{U}$ and $\overline{V}$ are acyclic over $\bZ$ and have trivial fundamental group. Then a Mayer-Vietoris sequence argument implies that $\widetilde{F}$ is also acyclic. (Technical note: either one needs to assume the action is ``nice'' or that one is using, say, Cech cohomology.) It thus follows that (each component of) $F$ itself has the  $\bZ[\bZ^{n-1}]$-homology of $T^{n-1}$.

It remains to discuss the homology of $W$.  The full action of $\bZ^{n}$ on $\bR^{n}$ creates a $\bZ^{n}/\bZ^{n-1}=\bZ$ orbit of pairwise disjoint ``parallel'' copies of $\widetilde{F}$, separating $\bR^{n}$ into a sequence of ``strip'' domains $U_{i}$. The closures $\overline{U}_{i}$ of these strip domains are all acyclic, simply connected by van Kampen's theorem, invariant precisely under $\pi_{1}(T^{n})^{C_{2}}$, and cover a  complementary domain in $T^{n}$. It follows that both complementary domain(s) there have the $\bZ[\bZ^{n-1}]$-homology of $T^{n-1}$, and have $\pi_{1}=\bZ^{n-1}$. In the case where the fixed point set has two components, and two complementary domains interchanged by the involution, this describes the orbit space as well.

Finally we must complete the argument in the case when the fixed point set is connected and has a single complementary domain. Then the (interior of) the orbit space is covered two-to-one by the complement of the fixed set in $T^{n}$. In this case $\text{int}W$ is necessarily nonorientable, with orientable double covering given by $T^{n}-F$. It is necessary to note that the action of $C_{2}$ on $H_{1}(T^{n}-F)=\bZ^{n-1}$ is trivial.  Indeed, $H_{1}(T^{n}-F)$ and $H_{1}(F)$ coincide in $H_{1}(T^{n})$. Also $W$ is aspherical since it is covered by a contractible strip domain in $\bR^{n}$. It follows that $\pi_{1}(W)$ is a torsion-free central extension of $\bZ^{n-1}$ by $C_{2}$, hence isomorphic to $\bZ^{n-1}$. 
\end{proof}

\begin{remark}
Note that in the Type 1 case, the orientable double covering $T^{n}-F\to W$ is trivial over the image of $\pi_{1}(\partial W)$, and completely determined by this condition.
\end{remark}
\begin{remark}
When $n=3$ (and the action is locally linear) we observe that the orbit space $W^{3}$ is an irreducible $3$-manifold. Any embedded $2$-sphere would be trivially covered by a pair of $2$-spheres in $T^{3}$. Since $T^{3}$ is irreducible, these $2$-spheres must bound $3$-balls in $T^{3}$. It follows that the original $2$-sphere in $W^{3}$ also must bound  a ball.
\end{remark}

\begin{remark}
The observation that one obtains integral, not just mod $2$, information about codimension-one
 fixed sets and their complementary domains was perhaps first observed by the author and the late D. Galewski in \cite{EdmondsGalewski1976}, in the context of PL, not necessarily locally linear, actions on spheres.
 \end{remark}

Our goal now becomes one of showing that any homology $(n-1)$-torus or pair of homology $(n-1)$-tori arise as fixed point sets of locally linear involutions, and that any two such involutions with the same fixed point set are equivalent.
\section{Classification of homology tori}
Here we describe the classification of the sort of homology tori that appear as codimension-one fixed point sets in standard tori.
\begin{defn}
A $\bZ[\bZ^{n}]$-homology $n$-torus is a closed orientable $n$-manifold $M^{n}$ with the properties that $H_{1}(M_{n};\bZ)=\bZ^{n}$ and $\widetilde{H}_{*}(\widetilde{M}^{n};\bZ)=0$, where $\widetilde{M}^{n}$ denotes the universal abelian cover of $M^{n}$ (with deck transformation group $\bZ^{n}$) and $\widetilde{H}$ denotes reduced homology.
\end{defn}
In dimensions at least $3$ one can obtain simple nontrivial  examples in the form  $T^{n}\#\Sigma^{n}$, where $\Sigma^{n}$ is a non-simply connected integral homology  sphere. With more work one can construct interesting examples that do not split in such a simple  way.

Since the torus $T^{n}$ is aspherical, for any $\bZ[\bZ^{n}]$-homology $n$-torus $M^{n}$ there is a map $f:M^{n}\to T^{n}$ inducing an isomorphism of $H_{1}$, indeed all $H_{k}$, and of homology with local coefficients $\bZ[\bZ^{n}]$, well-defined up to homotopy and composition with a self-homotopy  equivalence of $T^{n}$.

\begin{defn}
A $\bZ[\bZ^{n}]$-homology-cobordism (of homology $n$-tori) is an $(n+1)$-mani\-fold $W^{n+1}$ with two boundary  components, $X^{n}$ and $Y^{n}$ such that all three spaces have compatible maps to $T^{n}$ inducing isomorphisms of homology with  local coefficients $\bZ[\bZ^{n}]$.  In particular $$H_{*}(W^{n+1},X^{n};\bZ[\bZ^{n}])=0=H_{*}(W^{n+1},Y^{n};\bZ[\bZ^{n}])$$
\end{defn}

If such a cobordism exists we say that $X^{n}$ and $Y^{n}$ are $\bZ[\bZ^{n}]$-homology cobordant. A  $\bZ[\bZ^{n}]$-homology-cobordism $W^{n+1}$ will be called a \emph{strong} $\bZ[\bZ^{n}]$-homology cobordism if $\pi_{1}(W^{n+1})\approx \bZ^{n}$.

We will apply the following two results that generalize the standard topological surgery classification of homotopy tori to the context of homology tori.
\begin{prop}\label{prop:cobordant}
Any two  $\bZ[\bZ^{n}]$-homology $n$-tori are strongly $\bZ[\bZ^{n}]$-homology cobordant.
\end{prop}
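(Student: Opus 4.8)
The plan is to show, via the handle-trading methods of surgery theory, that every $\bZ[\bZ^{n}]$-homology $n$-torus $M^{n}$ is strongly $\bZ[\bZ^{n}]$-homology cobordant to the standard torus $T^{n}$ itself; the general statement then follows because the relation is an equivalence relation. Symmetry is clear. For transitivity one glues two strong $\bZ[\bZ^{n}]$-homology cobordisms along a common boundary component $Z$; when $Z=T^{n}$ (the only case needed) the vanishing of $H_{*}(W,Z;\bZ[\bZ^{n}])$ forces $H_{1}(Z;\bZ)\to H_{1}(W;\bZ)$, hence $\pi_{1}(Z)\to\pi_{1}(W)$, to be an isomorphism on each side, so van Kampen gives $\pi_{1}$ of the glued piece equal to $\bZ^{n}$; excision and the exact sequences of the relevant triples give the vanishing of the relative $\bZ[\bZ^{n}]$-homology at the two outer ends, and the maps to $T^{n}$ glue after adjusting the reference map of the middle $T^{n}$ by a self-homotopy-equivalence.

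Now fix $M$ with its reference map $f\colon M\to T^{n}$. It has degree one and is a $\bZ[\bZ^{n}]$-homology equivalence, hence a $\bZ$-homology equivalence; so (by the Atiyah--Hirzebruch spectral sequence) $\nu_{M}$ is stably pulled back from $T^{n}$ and $f$ refines to a degree-one normal map over $T^{n}$, whose surgery kernel with $\bZ[\bZ^{n}]$-coefficients is already zero. I would then do surgery below the middle dimension inside a normal cobordism. Since $\pi_{1}(M)$ is finitely presented with abelianization $\bZ^{n}$, the subgroup $[\pi_{1}M,\pi_{1}M]$ is the normal closure of finitely many elements; surgering framed circles representing them (circles in an orientable manifold carry framings compatible with the bundle data) replaces $M$ by a manifold $M_{1}$ with $\pi_{1}\cong\bZ^{n}$ and introduces a finitely generated free $\bZ[\bZ^{n}]$-module of surgery kernel concentrated in dimension two, which is $\pi_{2}(M_{1})$. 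Representing a $\bZ[\bZ^{n}]$-basis of $\pi_{2}(M_{1})$ by disjoint embedded $2$-spheres — whose normal bundles are stably trivial by the normal structure, hence trivial since their rank is at least three — and surgering them completes the normal cobordism $W$, with $\pi_{1}(W)=\bZ^{n}$ and $H_{*}(W,M;\bZ[\bZ^{n}])=0$; Poincar\'e--Lefschetz duality over $\bZ[\bZ^{n}]$ then forces $H_{*}(W,M';\bZ[\bZ^{n}])=0$ for the opposite boundary $M'$ as well. Thus $W$ is a strong $\bZ[\bZ^{n}]$-homology cobordism from $M$ to $M'$, and $M'$, having $\pi_{1}=\bZ^{n}$ and the $\bZ[\bZ^{n}]$-homology of $T^{n}$, has universal cover which is simply connected and $\bZ$-acyclic, hence contractible; so $M'$ is a homotopy torus.

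Finally, apply the standard surgery-theoretic classification of homotopy tori (Hsiang--Shaneson, Wall, Kirby--Siebenmann; Freedman--Quinn in dimension four; classical in dimensions at most three): $M'$ is homeomorphic to $T^{n}$, and a homeomorphism turns $M'\times I$ into a strong $\bZ[\bZ^{n}]$-homology cobordism from $M'$ to $T^{n}$. Stacking this on $W$ connects $M$ to $T^{n}$, as required.

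The main obstacle is the surgery paragraph: one must carry out the handle trading within a \emph{normal} cobordism over $T^{n}$ (this is what makes the dimension-two spheres have trivial normal bundle and guarantees that the relative $\bZ[\bZ^{n}]$-homology really dies), and one must deal with the low dimensions $n=3$ and $n=4$, where general position and the Whitney trick are not directly available in the naive sense; the case $n=4$ rests on topological surgery for the good fundamental group $\bZ^{4}$, while $n\le 2$ is vacuous and $n=3$ is handled by three-manifold topology.
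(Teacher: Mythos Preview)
Your overall architecture coincides with the paper's: reduce to showing each $\bZ[\bZ^{n}]$-homology torus is strongly cobordant to $T^{n}$, produce such a cobordism by a plus-construction-type argument, and then invoke the classification of homotopy tori. The paper simply cites the Freedman--Quinn manifold plus construction for $n\ge 4$; what you have written for $n\ge 5$ is essentially that construction carried out by hand (attach $2$-handles to kill the perfect commutator subgroup, then $3$-handles to cancel them in $\bZ[\bZ^{n}]$-homology). One inaccuracy: for $n\ge 5$ the $\bZ[\bZ^{n}]$-surgery kernel of $M_{1}$ is \emph{not} concentrated in degree~$2$; Poincar\'e duality forces an equal free summand in degree $n-2$. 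This does not break your argument, since what you actually use is that the $3$-handles attached along a $\bZ[\bZ^{n}]$-basis of $\pi_{2}(M_{1})=H_{2}(\widetilde{M}_{1})$ cancel the $2$-handles in $C_{*}(\widetilde{W},\widetilde{M})$, giving $H_{*}(W,M;\bZ[\bZ^{n}])=0$; the dual computation then gives $H_{*}(W,M';\bZ[\bZ^{n}])=0$ and (since the dual handles have index $\ge 3$ when $n\ge 5$) $\pi_{1}(M')=\bZ^{n}$. For $n=4$ the dual handles include $2$-handles, so $\pi_{1}(M')=\bZ^{4}$ is not automatic, and the embedded framed $2$-spheres are not available by general position; you correctly flag that this case needs Freedman--Quinn for good groups, which is exactly what the paper invokes.

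The genuine gap is $n=3$. ``Handled by three-manifold topology'' is not a proof, and this is in fact the most delicate case. The paper appeals to a specific result of Jahren and Kwasik showing that the $\bZ[\bZ^{3}]$-homology structure set of $T^{3}$ is trivial; their argument uses a surgery exact sequence for homology equivalences, periodicity to shift into higher dimensions, and rigidity results of Farrell--Jones type. (The paper's remark also sketches an alternative: show the normal cobordism to the identity exists, then kill the $L_{4}(\bZ^{3})$ obstruction by explicit modifications.) None of this is routine $3$-manifold topology, and your proposal gives no indication of how to produce even a $\bZ[\bZ^{3}]$-homology cobordism from a general $\bZ[\bZ^{3}]$-homology $3$-torus to $T^{3}$. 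You should either supply such an argument or cite the relevant result.
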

\begin{proof}
It suffices to show that any  $\bZ[\bZ^{n}]$-homology $n$-torus $X^{n}$ is strongly $\bZ[\bZ^{n}]$-homology cobordant to the standard torus $T^{n}$.

For $n\le 2$ this is true by the classification of $1$- and $2$-manifolds. 

For $n\ge 4$ it is an immediate consequence of the ``Plus Construction'' of Freedman and Quinn \cite{FreedmanQuinn1990}, 11.1A (dimension $4$) and 11.2 (higher dimensions). This requires noting that the kernel of the abelianization map $\pi_{1}\to \bZ^{n}$ is perfect and moreover that $\bZ^{n}$ is ``good'' (required only in dimension $4$).  The Plus construction describes a homology  cobordism with $\pi_{1}=\bZ^{n}$ to a homology torus with $\pi_{1}=\bZ^{n}$, and the latter is homeomorphic to $T^{n}$, by the classification of homotopy tori.

For $n=3$ this is  a special case of Theorem 15 of Jahren and Kwasik \cite{JahrenKwasik2003}, who prove that the $\bZ[\pi_{1}(M^{3})]$-homology structure set of a closed aspherical $3$-manifold is trivial in the cases when the manifold is Seifert fibered, hyperbolic or Haken with at least one hyperbolic piece in its torus decomposition. In our case we have $M^{3}=T^{3}$, which is certainly Seifert fibered. This line of reasoning requires a version of the surgery exact sequence for homology  equivalences, and the use of periodicity to move into higher dimensions, finally quoting higher dimensional rigidity results of Farrell and Jones, Leeb, and Stark. This theory produces a $\bZ[\bZ^{n}]$-homology cobordism. The Plus construction, applied to the cobordism gives a strong $\bZ[\bZ^{n}]$-homology cobordism.
\end{proof}
\begin{remark}
We outline a somewhat less-learned approach for the special case of $3$-dimensional homology tori. First we need to note a priori that any $\bZ[\bZ^{n}]$-homology equivalence $X^{3}\to T^{3}$  is normally cobordant to the identity $\text{id}:T^{3}\to T^{3}$. This is in fact Theorem 2 in \cite{JahrenKwasik2003}, which uses simply the existence of the surgery machine and an explicit calculation of low dimensional normal invariants to prove that the surgery obstruction map is a split monomorphism. It remains to justify that the surgery obstruction of the normal cobordism can be made to vanish. Let $F:W^{4}\to T^{3}\times I$ be a normal map. The Wall group $L_{4}(\bZ^{3})\approx \bZ\oplus\bZ_{2}^{3}$ by the Wall-Shaneson product  formula. The $\bZ$ is given by signature $/8$ and the $\bZ_{2}$ terms are given by codimension $2$ Arf invariants. We can kill the signature by connected sum with a suitable number of copies of the $\pm E_{8}$ manifold. Similarly we may change any nonzero Arf invariants by replacing a tubular neighborhood of a transverse preimage of a standard $2$-torus, of the form $F^{2}\times \text{int}D^{2}$, with $F^{2}\times (T^{2}-\text{int}D^{2})$, where the $T^{2}$ factor is given the framing with non-zero Arf invariant. Compare the argument of J. Davis \cite{Davis2006}, proof of the theorem. Then topological surgery can be carried out on the modified $4$-manifold, since the surgery obstruction vanishes and the fundamental group of the target is ``good'',  to produce the required strong $\bZ[\bZ^{3}]$-homology cobordism.
\end{remark}
\begin{prop}\label{prop:type2homeomorphic}
Any two strong $\bZ[\bZ^{n}]$-homology cobordisms (irreducible if $n+1=3$) between the same  pair of $\bZ[\bZ^{n}]$-homology $n$-tori are homeomorphic.
\end{prop}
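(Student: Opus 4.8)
The plan is to reduce the assertion to topological rigidity of compact aspherical manifolds with fundamental group $\bZ^{n}$, exactly as in the proof of Proposition~\ref{prop:cobordant}. First I would note that each $W_i$ is aspherical: since $W_i$ is a \emph{strong} $\bZ[\bZ^{n}]$-homology cobordism, its universal cover is its $\bZ^{n}$-cover, which is simply connected and, by the defining condition together with the contractibility of the $\bZ^{n}$-cover $\bR^{n}$ of $T^{n}$, is $\bZ$-acyclic, hence contractible. Thus $W_0$ and $W_1$ are compact aspherical $(n+1)$-manifolds with $\pi_1 \cong \bZ^{n}$ and $\partial W_0 = X\sqcup Y = \partial W_1$, and we may assume the structure maps $F_i\colon W_i\to T^{n}$ restrict to the same reference maps $f_X,f_Y$ on $X$ and $Y$. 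Using that $W_1$ is aspherical I would choose a map $g\colon W_0\to W_1$ realizing the identity of $\pi_1 = \bZ^{n}$ and compatible with $F_0,F_1$; since $g$ and the inclusions of $X$ and of $Y$ induce the same homomorphisms into $\bZ^{n}=\pi_1 W_1$ and $W_1$ is aspherical, after a homotopy $g$ restricts to the identity on $\partial W_0$. Because $\pi_1 W_0\cong\bZ^{n}$ the map $g$ is a $\pi_1$-isomorphism, and being compatible with the two homology equivalences $F_i$ it is a $\bZ[\bZ^{n}]$-homology equivalence, hence lifts to a $\bZ$-homology isomorphism of the contractible universal covers; by the Whitehead theorem $g$ is a homotopy equivalence. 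So $g\colon(W_0,\partial W_0)\to(W_1,\partial W_1)$ is a homotopy equivalence that is the identity on the boundary.

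It now suffices to see that $g$ is homotopic to a homeomorphism. Equip $g$ with normal data; its normal invariant lies in $[W_0/\partial W_0,\, G/TOP]$. Because the reference homology equivalences $f_X,f_Y$ are in particular integral homology equivalences, the pair $(W_0,\partial W_0)$ has the same integral homology as $(T^{n}\times I,\, \partial(T^{n}\times I))$, so this normal-invariant group, the surgery-obstruction group $L_{n+1}(\bZ^{n})$, and the obstruction map between them (which for an aspherical pair depends only on $\pi_1$ and dimension, and so coincides with the one for the model $T^{n}\times I$) are exactly those occurring in the surgery classification of $h$-cobordisms on $T^{n}$. One then argues as for Proposition~\ref{prop:cobordant}: kill the signature part of the surgery obstruction by connected sum with copies of the $\pm E_8$-manifold, kill the codimension-two Arf invariants by the torus-exchange trick of the Remark following that Proposition, and perform topological surgery, valid since $\bZ^{n}$ is a good group, to make $g$ a homeomorphism $W_0\cong W_1$. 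Equivalently, one may quote directly the topological rigidity of compact aspherical manifolds with polycyclic fundamental group (Freedman--Quinn \cite{FreedmanQuinn1990} in dimension $4$, Farrell--Jones in dimensions $\ge5$), which says precisely that such a $g$ is homotopic to a homeomorphism.

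The low-dimensional cases are special. For $n+1=3$ the homology $2$-tori $X$ and $Y$ are genuine $2$-tori, so $W_0$ and $W_1$ are $h$-cobordisms on $T^{2}$; under the irreducibility hypothesis they are Haken, and Waldhausen's theorem — a special case of Theorem~15 of Jahren--Kwasik \cite{JahrenKwasik2003} — gives $W_0\cong T^{2}\times I\cong W_1$. For $n+1\le2$ one appeals to the classification of $1$- and $2$-manifolds. The main obstacle is the surgery step in dimensions $\ge4$: arranging the surgery obstruction to vanish, where (as in Proposition~\ref{prop:cobordant}) the delicate point is carrying out the codimension-two Arf-invariant adjustment without disturbing the homeomorphism already in place on $\partial W_0$; in dimension $3$ the corresponding subtlety is that irreducibility is exactly what is needed in order to invoke Waldhausen's (equivalently Jahren--Kwasik's) rigidity theorem.
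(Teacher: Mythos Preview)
Your approach is quite different from the paper's three-line proof, which for $n+1\ge 4$ simply cites the uniqueness clause of the Freedman--Quinn Plus Construction (\cite{FreedmanQuinn1990}, p.~197), for $n+1=3$ the $h$-cobordism theorem for Haken $3$-manifolds, and for $n+1=2$ the classification of surfaces. There is no construction of a relative homotopy equivalence and no surgery-exact-sequence computation in the paper's argument.

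The main gap in your proposal is the surgery step. Having produced a homotopy equivalence $g:(W_0,\partial)\to(W_1,\partial)$ that is the identity on the boundary, what you must show is that $g$ represents the base point of the structure set $\mathcal{S}(W_1\ \mathrm{rel}\ \partial)$. But the operations you invoke---connected sum with $\pm E_8$ and the codimension-two Arf replacement from the Remark after Proposition~\ref{prop:cobordant}---\emph{alter the domain manifold}. They are devices for adjusting the surgery obstruction of a normal cobordism so as to \emph{produce} a desired manifold (which is exactly what that Remark does, proving existence of a cobordism to $T^3$), not for showing that a given homotopy equivalence between two \emph{fixed} manifolds is homotopic to a homeomorphism. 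After such moves you would only obtain a homeomorphism from some modified $W_0'\neq W_0$ to $W_1$. Note also that $g$ is already a homotopy equivalence, so there is no ``surgery obstruction'' to kill; the relevant invariants are the normal invariant of $g$ in $[W_1/\partial W_1,\,G/TOP]$ and the action of $L_{n+2}(\bZ^n)$ on the structure set.

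Your fallback to Farrell--Jones/Freedman--Quinn rigidity for polycyclic groups is closer in spirit, but those theorems are formulated for closed aspherical manifolds (or for pairs with $\pi_1$-injective, aspherical boundary); here $X$ and $Y$ need not be aspherical and $\pi_1(X),\pi_1(Y)\to\bZ^n$ are surjections rather than injections, so the relative vanishing $\mathcal{S}(W_1\ \mathrm{rel}\ \partial)=\ast$ is not an off-the-shelf citation and would itself require justification (for instance, a careful identification of the rel-$\partial$ surgery sequence of $W_1$ with that of $T^n\times I$). The Plus Construction uniqueness that the paper invokes is tailored precisely to this situation---a $\bZ[\pi]$-homology cobordism with $\pi_1$ equal to the target group $\pi$---and needs no asphericity hypothesis on the ends.
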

\begin{proof}
For $n+1\ge 4$ this is an immediate consequence of the uniqueness clause in the Freedman-Quinn Plus Construction, \cite{FreedmanQuinn1990}, p. 197. For $n+1=3$ it is a special case of the $h$-cobordism theorem for Haken $3$-manifolds. And for $n+1=2$ it is a trivial consequence of the classification of surfaces.
\end{proof}
\begin{remark}
We could drop the irreducibility hypothesis when $n+1=3$ by invoking the Poincar\'{e} Conjecture as proved by G. Perelman. But since irreducibility is an easily verified necessary condition it seems reasonable simply to assume it. \end{remark}
For the classification of Type 1 involutions we need similar results where strong $\bZ[\bZ^{n}]$-homology cobordisms are replaced by what we shall call strong $\bZ[\bZ^{n}]$-homology M\"obius bands.

\begin{defn}
A $\bZ[\bZ^{n}]$-homology M\"obius band is a nonorientable $(n+1)$-manifold $W^{n+1}$ with one boundary  component, $X^{n}$, such that both $X^{n}$ and $W^{n+1}$ have the $\bZ[\bZ^{n}]$ homology of $T^{n}$, and the the inclusion induced homomorphism $H_{1}(X^{n})\to H_{1}(W^{n+1})$ is injective with image of index $2$. The homology M\"obius band is called strong if in addition $\pi_{1}(W^{n+1})\approx \bZ^{n}$.
\end{defn}

In this context we have the analogues of the existence and topological uniqueness of strong homology  cobordisms of homology tori, as follows.
\begin{prop}\label{prop:mobius}
Any   $\bZ[\bZ^{n}]$-homology $n$-torus $X^{n}$ is the boundary of a strong $\bZ[\bZ^{n}]$-homology M\"obius band.
\end{prop}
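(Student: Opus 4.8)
The plan is to produce $W$ as the orbit space of a free, orientation-reversing involution $\tau$ on an orientable strong $\bZ[\bZ^{n}]$-homology cobordism from $X^{n}$ to itself, with $\tau$ interchanging the two boundary copies of $X^{n}$; this is a $\bZ_{2}$-equivariant enhancement of Proposition~\ref{prop:cobordant}, modelled on the way the standard M\"obius band $T^{n}\ttimes I$ over the torus arises from its orientation double cover. To set up the model, fix an epimorphism $\phi\colon\bZ^{n}\to\bZ_{2}$, let $\widehat T^{n}\to T^{n}$ be the associated connected double cover --- itself a standard torus --- and let $\delta\colon\widehat T^{n}\to\widehat T^{n}$ be its nontrivial deck transformation, a free involution acting trivially on $\pi_{1}(\widehat T^{n})$. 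Then $\tau_{0}=\delta\times(t\mapsto 1-t)$ is a free, orientation-reversing involution on $\widehat T^{n}\times I$, and $(\widehat T^{n}\times I)/\tau_{0}=T^{n}\ttimes I$, with boundary $\widehat T^{n}$ and fundamental group $\bZ^{n}$.

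Given an arbitrary $\bZ[\bZ^{n}]$-homology $n$-torus $X^{n}$, apply Proposition~\ref{prop:cobordant} to obtain a strong $\bZ[\bZ^{n}]$-homology cobordism $V$ from $X^{n}$ to $\widehat T^{n}$. Form
\[
\widehat W \;=\; V\;\cup_{\widehat T^{n}}\;\bigl(\widehat T^{n}\times I\bigr)\;\cup_{\widehat T^{n}}\;V',
\]
where $V'$ is a second copy of $V$, the torus end of $V$ being glued to $\widehat T^{n}\times\{0\}$ by the identity and the torus end of $V'$ being glued to $\widehat T^{n}\times\{1\}$ by $\delta$. Because of this twist, the identity map between the two copies of $V$, together with $\tau_{0}$ on the middle block, assembles (using $\delta^{2}=\mathrm{id}$) into a single free, orientation-reversing involution $\tau$ on $\widehat W$ that interchanges $V$ and $V'$, hence interchanges the two boundary copies of $X^{n}$. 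Set $W=\widehat W/\tau$, a nonorientable $(n+1)$-manifold with $\partial W=X^{n}$; its universal cover $\widetilde W$ coincides with that of $\widehat W$.

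The verifications are routine. An iterated van Kampen argument --- every amalgamating homomorphism $\pi_{1}(\widehat T^{n})\to\pi_{1}(V)$ and $\pi_{1}(\widehat T^{n})\to\pi_{1}(\widehat T^{n}\times I)$ is an isomorphism --- gives $\pi_{1}(\widehat W)\cong\bZ^{n}$ with the fundamental groups of $V$, $\widehat T^{n}\times I$ and $V'$ all mapping isomorphically onto it; hence the preimages in $\widetilde W$ of $V$, of $\widehat T^{n}\times I$, of $V'$ and of the two splitting tori are all connected, and a Mayer--Vietoris computation over the $\bZ$-acyclic universal covers of the pieces shows $\widetilde W$ is $\bZ$-acyclic. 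Being simply connected and acyclic, $\widetilde W$ is contractible; thus $W$ is aspherical and $\pi_{1}(W)$ is torsion-free. Since $\delta$ acts trivially on $\pi_{1}(\widehat T^{n})$, which generates $\pi_{1}(\widehat W)$, the exact sequence $1\to\bZ^{n}\to\pi_{1}(W)\to\bZ_{2}\to 1$ is a central extension; a torsion-free central extension of $\bZ^{n}$ by $\bZ_{2}$ is isomorphic to $\bZ^{n}$ (as in the earlier treatment of Type 1 orbit spaces), so $\pi_{1}(W)\cong\bZ^{n}$. In particular the universal abelian cover of $W$ is $\widetilde W$, which is contractible, so $W$ has the $\bZ[\bZ^{n}]$-homology of $T^{n}$. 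Finally $H_{1}(X^{n})\to H_{1}(W)$ factors as the isomorphism $H_{1}(X^{n})\xrightarrow{\cong}H_{1}(\widehat W)$ followed by the index-two inclusion $H_{1}(\widehat W)\hookrightarrow H_{1}(W)$ induced by $\pi_{1}(\widehat W)\hookrightarrow\pi_{1}(W)$, so it is injective with image of index two. Hence $W$ is a strong $\bZ[\bZ^{n}]$-homology M\"obius band bounding $X^{n}$. (For $n\le 2$, where $X^{n}$ is $S^{1}$ or $T^{2}$, the twisted bundle $T^{n}\ttimes I$ already does the job.)

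The main difficulty is in the design of the construction rather than in any single estimate. The obvious shortcut --- glue a strong homology cobordism $X^{n}\rightsquigarrow T^{n}$ directly onto the standard M\"obius band $T^{n}\ttimes I$ along its boundary $T^{n}$ --- fails: since $\pi_{1}(\partial(T^{n}\ttimes I))\hookrightarrow\pi_{1}(T^{n}\ttimes I)$ has index two, the gluing torus acquires two lifts in the universal cover, and Mayer--Vietoris then leaves an unwanted $\bZ$ summand in $H_{1}$ of that cover, so the result is not a homology M\"obius band. Splicing the cobordism symmetrically onto \emph{both} ends of $\widehat T^{n}\times I$ and dividing by $\tau$ only afterwards is precisely what repairs this. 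Among the remaining points, the one requiring care is the identification $\pi_{1}(W)\cong\bZ^{n}$, which rests on combining the asphericity of $W$ (forcing torsion-freeness of $\pi_{1}$) with the centrality of the $\bZ_{2}$-extension, exactly as in the analysis of Type 1 orbit spaces above.
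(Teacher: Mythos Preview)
Your construction is correct and does produce a strong homology M\"obius band. But your objection to the ``obvious shortcut'' is mistaken, and in fact your quotient $W=\widehat W/\tau$ \emph{is} that shortcut: quotienting $V\cup(\widehat T^{n}\times I)\cup V'$ by the involution that swaps $V\leftrightarrow V'$ and acts as $\tau_{0}$ on the middle yields precisely $V\cup_{\widehat T^{n}}\bigl((\widehat T^{n}\times I)/\tau_{0}\bigr)=V\cup_{T^{n}}(T^{n}\ttimes I)$. This is exactly the paper's one-line proof: glue a strong $\bZ[\bZ^{n}]$-homology cobordism from $X^{n}$ to $T^{n}$ onto the standard M\"obius band along its torus boundary.

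The error in your objection is a miscount in the Mayer--Vietoris computation for the universal cover of $W=V\cup_{T^{n}}(T^{n}\ttimes I)$. You correctly note that the gluing torus lifts to two copies of $\bR^{n}$, but you overlook that $V$ \emph{also} lifts to two disjoint copies of its (acyclic) universal cover $\widetilde V$: since $\pi_{1}(T^{n})\to\pi_{1}(V)$ is an isomorphism while $\pi_{1}(T^{n})\to\pi_{1}(W)$ has index-two image, the same holds for $\pi_{1}(V)\to\pi_{1}(W)$. Taking $A$ to be the (contractible) universal cover of $T^{n}\ttimes I$ and $B=\widetilde V\sqcup\widetilde V$, the intersection $A\cap B$ consists of two copies of $\bR^{n}$, each joining $A$ to a \emph{different} component of $B$; the map $H_{0}(A\cap B)\to H_{0}(A)\oplus H_{0}(B)$ is then injective, and $\widetilde H_{*}(\widetilde W)=0$ with no spurious $\bZ$ in $H_{1}$. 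So the shortcut works directly, and your longer route---while valid---simply reconstructs the same $W$ by a detour through its orientation double cover.
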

\begin{proof}
Just attach a strong $\bZ[\bZ^{n}]$-homology cobordism between $X^{n}$ and $T^{n}$ to the standard M\"obius band $T^{n}\ttimes I$ along the boundary $T^{n}$.
\end{proof}
\begin{lemma}\label{lemma:mobius}
Let $W^{n+1}$ be a strong M\"obius band with boundary $T^{n}$. (Assume $W^{n+1}$ is irreducible if $n+1=3$.) Then $W^{n+1}$ is homeomorphic to $T^{n}\ttimes I$.
\end{lemma}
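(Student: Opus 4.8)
The plan is to pass to the orientation double cover of $W^{n+1}$ and recognize it as a standard homology cobordism, then deduce the structure of $W$ itself. Concretely, let $\widehat{W}\to W^{n+1}$ be the orientable double cover. Since $\partial W=T^{n}$ and the inclusion $H_{1}(T^{n})\to H_{1}(W)\approx\bZ^{n}$ has image of index $2$, the restriction of this double cover to the boundary is the connected double cover of $T^{n}$ classified by that index-$2$ subgroup; that cover is again a standard torus $T^{n}$, mapping to the base $T^{n}$ by a standard $2$-fold covering. Thus $\widehat{W}$ is a compact orientable $(n+1)$-manifold with a single boundary component $T^{n}$, equipped with a free involution $\tau$ whose quotient is $W$. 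The first step is to verify that $\widehat{W}$ has $\pi_{1}\approx\bZ^{n}$ (it is the index-$2$ subgroup of $\pi_{1}(W)\approx\bZ^{n}$ corresponding to the orientation character, hence again $\bZ^{n}$) and that it is a $\bZ[\bZ^{n}]$-homology product — i.e. $H_{*}(\widehat{W},\partial\widehat{W};\bZ[\bZ^{n}])=0$. This last point needs a transfer/Mayer--Vietoris bookkeeping argument comparing the $\bZ[\bZ^{n}]$-homology of $\widehat{W}$ with that of $W$; the universal abelian cover of $\widehat{W}$ is the \emph{same} space as the universal abelian cover of $W$, which is $\bZ$-acyclic by hypothesis, so in fact $\widehat{W}$ is a strong $\bZ[\bZ^{n}]$-homology cobordism with one end $T^{n}$. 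Since a compact manifold with $\bZ[\bZ^{n}]$-homology of $T^{n}$ and only one visible boundary component must in fact have a second boundary component (by a half-lives-half-dies / Poincar\'e--Lefschetz duality count over $\bZ[\bZ^{n}]$), $\widehat{W}$ is a strong $\bZ[\bZ^{n}]$-homology cobordism from $T^{n}$ to some $\bZ[\bZ^{n}]$-homology torus $Y^{n}$, and $\tau$ interchanges the two copies of the boundary, so $Y^{n}\cong T^{n}$ as well.

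Next I would invoke Proposition~\ref{prop:type2homeomorphic}: $\widehat{W}$, being a strong $\bZ[\bZ^{n}]$-homology cobordism between two standard tori (irreducible if $n+1=3$, which follows from the irreducibility hypothesis on $W$ by the $2$-sphere lifting argument in the Type~1 remark), is homeomorphic to $T^{n}\times I$. The remaining task is to promote this to an \emph{equivariant} statement — to show the free involution $\tau$ on $T^{n}\times I$ interchanging the ends is, after a homeomorphism, the standard deck transformation of $T^{n}\ttimes I$. For this I would feed $\tau$ back into the same machinery: the mapping torus / quotient considerations identify equivariant structures on $\widehat{W}$ rel boundary with homeomorphism classes of fillings $W$, and uniqueness here is exactly the uniqueness clause of the Freedman--Quinn Plus Construction (\cite{FreedmanQuinn1990}, p.~197) in dimensions $n+1\ge 4$, the $h$-cobordism theorem for Haken $3$-manifolds when $n+1=3$, and the classification of surfaces when $n+1=2$ — the same trichotomy used in Proposition~\ref{prop:type2homeomorphic}, now applied to $W$ rather than to a cobordism. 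Equivalently, one observes directly that $W$ is a strong $\bZ[\bZ^{n}]$-homology M\"obius band, and that the argument of Proposition~\ref{prop:type2homeomorphic} goes through verbatim with ``cobordism'' replaced by ``M\"obius band'' since the only input was the Plus-construction uniqueness and the $h$-cobordism theorem, neither of which cares about orientability of the total space.

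I expect the main obstacle to be the equivariance in the second paragraph: knowing $\widehat{W}\cong T^{n}\times I$ abstractly does not immediately say the covering involution is standard, and one must either carry the involution through the surgery-theoretic identification or argue that any two strong homology M\"obius bands with boundary $T^{n}$ are directly homeomorphic. The cleanest route is the latter — to check that the rigidity results quoted for Proposition~\ref{prop:type2homeomorphic} apply to nonorientable total spaces with $\pi_{1}=\bZ^{n}$ (the Farrell--Jones-type rigidity and the Freedman--Quinn uniqueness are insensitive to orientability), so that the classification of strong $\bZ[\bZ^{n}]$-homology M\"obius bands with boundary $T^{n}$ is literally parallel to the cobordism case and the standard model $T^{n}\ttimes I$ is the unique one. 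A secondary technical point, in the $3$-dimensional case, is to confirm that irreducibility of $W$ implies irreducibility of $\widehat{W}$, which is the $2$-sphere lifting argument already recorded in the earlier remark.
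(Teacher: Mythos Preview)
Your double-cover approach contains a concrete error and a genuine gap, and the correct route is the one you mention only as a fallback at the end.

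The error: the orientation double cover $\widehat{W}\to W$ restricts \emph{trivially} over $\partial W=T^{n}$, so $\partial\widehat{W}=T^{n}\sqcup T^{n}$, not a single connected torus. Any loop $\gamma\subset\partial W$ satisfies $w_{1}(W)|_{\gamma}=0$, since along $\gamma$ one has $TW|_{\gamma}\cong T(\partial W)|_{\gamma}\oplus\nu$ with $\nu$ trivial and $\partial W$ orientable; equivalently, the image of $\pi_{1}(\partial W)\to\pi_{1}(W)$ is exactly the index-$2$ subgroup $\ker w_{1}$. Boundary components cannot be conjured after the fact by a Poincar\'e--Lefschetz count---the boundary of $\widehat{W}$ is simply the preimage of $\partial W$. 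With the correct picture, $\widehat{W}$ is indeed a strong $\bZ[\bZ^{n}]$-homology cobordism from $T^{n}$ to $T^{n}$ with the deck involution $\tau$ swapping the ends, and Proposition~\ref{prop:type2homeomorphic} does give $\widehat{W}\cong T^{n}\times I$.

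The gap: this does not classify the involution $\tau$. Proposition~\ref{prop:type2homeomorphic} compares two cobordisms rel the same boundary via Plus-Construction uniqueness; here there is no perfect normal subgroup to kill ($\pi_{1}(W)$ is already $\bZ^{n}$), so that machinery does not apply to $W$ directly, and you are left needing a separate classification of free, end-swapping involutions on $T^{n}\times I$---which is no easier than the original problem. Your closing remark, that rigidity applies directly to the nonorientable $W$ since the relevant results are insensitive to orientability, is exactly right and is precisely what the paper does: it simply records that the topological structure set $\mathcal{S}(T^{n}\ttimes I)$ rel boundary vanishes, citing Kirby--Siebenmann (Appendix~C, in particular Theorem~C.7, which covers nontrivial disk bundles over tori) for $n+1\ge 5$, Freedman--Quinn for $n+1=4$ (the group $\bZ^{n}$ is good), Waldhausen's theory of Haken $3$-manifolds for $n+1=3$, and the classification of surfaces for $n+1=2$. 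No passage to a cover is needed, and the equivariance issue never arises.
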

\begin{proof}
What we need, from a topological surgery point of view, is for the topological structure set $\mathcal{S}(T^{n}\ttimes I)$ (rel boundary) to vanish. This  follows in high dimensions from the calculation of the surgery obstruction groups of $\pi_{1}=\bZ^{n}$ and the fact that topological surgery ``works'' when $n+1\ge 5$. For detailed treatment, see Kirby and Siebenmann \cite{KirbySiebenmann1977}, Appendix C, especially Theorems C.2 and C.7, where Theorem C.7 in particular allows nontrivial disk bundles over tori.

The same surgery argument applies when $n+1=4$, by Freedman and Quinn \cite{FreedmanQuinn1990}, since the fundamental groups in question are good.

When $n+1=3$, this  follows from standard Waldhausen theory of sufficiently large $3$-manifolds, since a M\"obius band is Haken. In dimension $n+1=2$, it is a consequence of the classification of surfaces.
\end{proof}
\begin{prop}\label{prop:type1homeomorphic}
Any two strong $\bZ[\bZ^{n}]$-homology M\"obius bands (irreducible if $n+1=3$) with the same   $\bZ[\bZ^{n}]$-homology $n$-torus as boundary are homeomorphic.
\end{prop}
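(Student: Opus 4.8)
The plan is to reduce the uniqueness statement for homology Möbius bands to the already-established uniqueness statements for homology cobordisms (Proposition \ref{prop:type2homeomorphic}) and for the model Möbius band (Lemma \ref{lemma:mobius}), by passing to orientation double covers. So let $W_0$ and $W_1$ be two strong $\bZ[\bZ^n]$-homology Möbius bands with common boundary the homology torus $X^n$. First I would observe that each $W_i$ is aspherical: it has $\pi_1(W_i)\approx\bZ^n$ and its universal cover is $\bZ$-acyclic (unwind the definition, or note that the universal cover is the $\bZ$-acyclic universal abelian cover of the homology torus structure, exactly as in the proof of the $\bZ[\bZ^{n-1}]$-homology proposition in Section 3), hence contractible. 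Thus each $W_i$ is a $K(\bZ^n,1)$ with boundary, and the orientation character $\pi_1(W_i)=\bZ^n\to\bZ_2$ is the unique index-two subgroup condition forced by the Möbius hypothesis $H_1(X^n)\to H_1(W_i)$ having image of index $2$; in particular the orientation characters of $W_0$ and $W_1$ agree under the canonical identifications with $\bZ^n$.

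Next I would take orientation double covers $\widehat W_i\to W_i$. Since the orientation character is the same index-two subgroup of $\bZ^n$ in both cases, $\widehat W_i$ is an orientable $(n+1)$-manifold with $\pi_1\approx\bZ^n$, and its boundary $\partial\widehat W_i$ is the double cover of $X^n$ corresponding to the restricted character on $H_1(X^n)$ — call it $\widehat X^n$ — which is the \emph{same} homology torus for $i=0,1$ because the character on $X^n$ is determined by the Möbius structure. A straightforward transfer/Gysin argument with $\bZ[\bZ^n]$ coefficients (the covering is classified by the fixed index-two subgroup, and $W_i$ is a $\bZ[\bZ^n]$-homology torus) shows $\widehat W_i$ is a $\bZ[\bZ^n]$-homology torus and that the two boundary inclusions $\widehat X^n\hookrightarrow\widehat W_i$ induce $\bZ[\bZ^n]$-homology isomorphisms; moreover $\widehat W_i$ retracts (again via transfer, or via the Mayer--Vietoris/asphericity picture) onto a ``half'', so that $\widehat X^n$ is in fact a $\bZ[\bZ^n]$-homology torus and $\widehat W_i$ is a one-sided collar of it — more precisely, cutting $\widehat W_i$ along a copy of $X^n$ (a fixed-point-set-type hypersurface dual to the character) exhibits it as a strong $\bZ[\bZ^n]$-homology cobordism from $\widehat X^n$ to $X^n$, hence also as a strong $\bZ[\bZ^n]$-homology Möbius structure's double. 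The point is to set things up so that $\widehat W_0$ and $\widehat W_1$ are strong $\bZ[\bZ^n]$-homology cobordisms between the same pair of homology $n$-tori, whence they are homeomorphic by Proposition \ref{prop:type2homeomorphic} (in the case $n+1=3$ the irreducibility of $W_i$ passes to $\widehat W_i$, since a fake ball upstairs would, by equivariant irreducibility of $T^3$-like pieces as in the Section 3 remark, descend or lift appropriately).

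Finally I would upgrade the homeomorphism $\widehat W_0\approx\widehat W_1$ to an equivariant one intertwining the two deck involutions $\tau_0,\tau_1$, and then pass to quotients to get $W_0\approx W_1$. For this it is cleanest to argue directly on $W_i$ rather than to equivariantize: attach to each $W_i$, along $X^n$, a fixed strong $\bZ[\bZ^n]$-homology cobordism $V$ from $X^n$ to the standard torus $T^n$ (which exists by Proposition \ref{prop:cobordant}); the result $W_i\cup_{X^n}V$ is then a strong $\bZ[\bZ^n]$-homology Möbius band with \emph{standard} boundary $T^n$, hence homeomorphic to $T^n\ttimes I$ by Lemma \ref{lemma:mobius}, and the homeomorphism can be taken to carry the collar $V$ to a collar of $\partial(T^n\ttimes I)$ (uniqueness of collars, plus the fact that the inclusion $V\hookrightarrow T^n\ttimes I$ is a $\bZ[\bZ^n]$-homology equivalence rel $T^n$ so its structure set rel boundary vanishes in the relevant dimensions by the same surgery input as in Lemma \ref{lemma:mobius}). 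Removing the standardized collar $V$ back off recovers $W_i\approx T^n\ttimes I$ compatibly, and composing gives $W_0\approx W_1$ rel $X^n$. The main obstacle I anticipate is precisely this last ``relative'' bookkeeping: ensuring that the homeomorphism $W_i\cup V\approx T^n\ttimes I$ restricts to the standard model on the attached piece $V$, i.e., controlling the structure set of the pair $(T^n\ttimes I, V)$ rel $T^n$ — this is where one must invoke the relative version of the surgery exact sequence (Kirby--Siebenmann Appendix C, Freedman--Quinn in dimension $4$, Waldhausen in dimension $3$) exactly as in the proof of Lemma \ref{lemma:mobius}, rather than merely its absolute form.
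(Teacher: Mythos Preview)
Your opening move coincides with the paper's: glue a fixed strong $\bZ[\bZ^{n}]$-homology cobordism $V$ from $X^{n}$ to $T^{n}$ onto each $W_{i}$ and invoke Lemma~\ref{lemma:mobius} to identify $W_{i}\cup_{X^{n}}V$ with $T^{n}\ttimes I$. The double-cover discussion that precedes this is, as you yourself note, inconclusive (making $\widehat W_{0}\approx\widehat W_{1}$ equivariant is no easier than the original statement), so the argument really rests on the ``attach $V$, then remove it'' step.

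That step has a genuine gap. The piece $V$ is \emph{not} a collar of $\partial(T^{n}\ttimes I)$ in any reasonable sense when $X^{n}\ne T^{n}$: it is a homology cobordism, not a product, and there is no canonical model copy of $V$ inside $T^{n}\ttimes I$ against which to standardize. What you would actually need is that any two embeddings of $V$ into $T^{n}\ttimes I$, agreeing with the identity on the $T^{n}$ boundary, are ambient isotopic rel $T^{n}$; equivalently, that the two resulting copies of $X^{n}$ in $T^{n}\ttimes I$ are isotopic with homeomorphic complementary Möbius pieces. The ``relative structure set of the pair $(T^{n}\ttimes I,V)$'' you invoke is not a well-posed surgery problem as stated, and unwinding it leads straight back to the assertion you are trying to prove.

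The paper sidesteps this entirely. Having identified each $W_{i}\cup_{X^{n}}V$ with $T^{n}\ttimes I$, it regards $(T^{n}\ttimes I)\times I$ as a cobordism between $W_{1}$ and $W_{2}$; the part of its boundary linking the two copies of $X^{n}$ is $V\cup_{T^{n}}V$, a strong $\bZ[\bZ^{n}]$-homology cobordism from $X^{n}$ to itself. One then applies the plus construction to $V\cup_{T^{n}}V$ and glues the resulting cobordism onto $(T^{n}\ttimes I)\times I$, producing an honest $s$-cobordism from $W_{1}$ to $W_{2}$ rel $X^{n}$. The $s$-cobordism theorem (valid here since $n+2\ge 5$ when $n+1\ge 4$) finishes the argument; the low-dimensional cases $n+1\le 3$ have $X^{n}=T^{n}$ already, so Lemma~\ref{lemma:mobius} applies directly. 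This avoids any need to locate or isotope $V$ inside $T^{n}\ttimes I$.
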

\begin{proof}
Let $W_{1}^{n+1}$ and $W_{2}^{n+1}$ be two strong $\bZ[\bZ^{n}]$-homology M\"obius bands (irreducible if $n+1=3$) with the same   $\bZ[\bZ^{n}]$-homology $n$-torus $X^{n}$ as boundary.  Also let $V^{n+1}$ be the unique strong $\bZ[\bZ^{n}]$-homology cobordism between $X^{n}$ and $T^{n}$. Consider $W_{i}^{n+1}\cup_{X^{n}}V^{n+1}$. By Lemma \ref{lemma:mobius}, $W_{i}^{n+1}\cup_{X^{n}}V^{n+1}\cong T^{n}\ttimes I$. Then we may view $(T^{n}\ttimes I)\times I$ as a strong $\bZ[\bZ^{n}]$-homology cobordism between $W_{1}^{n+1}$ and $W_{2}^{n+1}$. But over the boundary we have $V^{n}\cup_{T^{n}}V^{n}$ between $X^{n}$ and $X^{n}$. Applying the plus construction to $V^{n}\cup_{T^{n}}V^{n}$, we augment $(T^{n}\ttimes I)\times I$ to an actual $s$-cobordism between $W_{1}^{n+1}$ and $W_{2}^{n+1}$. Thus the result follows from the $s$-cobordism theorem. This requires $n+2\ge 5$ or $n+1\ge 4$.

It remains to consider the low-dimensional cases where $n+1\le 3$. In these cases the boundary is a standard torus, and the result follows from Lemma \ref{lemma:mobius}.
\end{proof}
\begin{remark}
Note that the orientable double cover of a strong M\"obius band with boundary $X^{n}$ is the unique  strong $\bZ[\bZ^{n}]$-homology cobordism of $X^{n}$ to itself. It follows that a strong $\bZ[\bZ^{n}]$-homology cobordism from $X^{n}$ to itself admits a unique free, orientation-reversing, homeomorphism exchanging boundary components.
\end{remark}
\section{Classification of involutions}
Finally we interpret the preceding classification of homology tori in the context of involutions with codimension-one fixed point set.
\begin{prop}
If $X^{n-1}$ and $Y^{n-1}$ are $\bZ[\bZ^{n-1}]$-homology $(n-1)$-tori and are $\bZ [\bZ^{n-1}]$-homology cobordant, by a strong (irreducible) $\bZ[\bZ^{n-1}]$-homology cobordism $W^{n}$, then the group $G=C_{2}$  acts on the $n$-torus $T^{n}$ with fixed point set  homeomorphic to $X^{n-1}\cup Y^{n-1}$, and with orbit space $W^{n}$.
\end{prop}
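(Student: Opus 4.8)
The plan is to realize the sought involution as the natural flip on the double of $W^{n}$. Set $D(W)=W\cup_{\partial W}W$, the double of $W^{n}$ along $\partial W=X^{n-1}\sqcup Y^{n-1}$, and let $\tau$ be the involution interchanging the two copies of $W$. Using a topological collar of $\partial W$ one checks that $\tau$ is locally linear (modelled near a fixed point on $(x,t)\mapsto(x,-t)$ of $\bR^{n-1}\times\bR$), with $\mathrm{Fix}(\tau)=\partial W=X^{n-1}\cup Y^{n-1}$ and orbit space $D(W)/\tau$ canonically homeomorphic to $W^{n}$; since the fixed set is nonempty of codimension one and $D(W)$ is connected, $\tau$ is automatically orientation-reversing. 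Thus the statement reduces entirely to producing a homeomorphism $D(W)\cong T^{n}$.

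I would next compute $\pi_{1}(D(W))$. The key point is that each inclusion $\partial W\hookrightarrow W$ induces a \emph{surjection} $\pi_{1}(X)\to\pi_{1}(W)\cong\bZ^{n-1}$, and likewise for $Y$, because it factors through the isomorphism $H_{1}(X)\xrightarrow{\cong}H_{1}(W)$. Applying van Kampen to the double, and being careful that the gluing locus $X\sqcup Y$ is disconnected, we see that $D(W)$ is the total space of a graph of groups with two vertices (the two copies of $W$, each carrying $\bZ^{n-1}$) and two edges (carrying $\pi_{1}(X)$ and $\pi_{1}(Y)$). Taking the $X$-edge as spanning tree, amalgamation over the surjection $\pi_{1}(X)\to\bZ^{n-1}$ collapses the two vertex groups onto a single $\bZ^{n-1}$, while the HNN stable letter $t$ contributed by the $Y$-edge is forced by surjectivity of $\pi_{1}(Y)\to\bZ^{n-1}$ to commute with that $\bZ^{n-1}$; hence $\pi_{1}(D(W))\cong\bZ^{n-1}\times\langle t\rangle\cong\bZ^{n}$, and $\pi_{1}(W)$ sits inside as a rank-$(n-1)$ direct summand. (The evident map $D(W)\to S^{1}$ crushing each copy of $W$ to an arc from $X$ to $Y$ shows $t$ has infinite order and splits off.)

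The next step is to see that $D(W)$ is aspherical. Pass to the universal cover $\widetilde{D(W)}$; because $\pi_{1}(W)$ is a summand of $\bZ^{n}$, this cover is a bi-infinite stack of copies of the \emph{universal} cover $\widetilde W$ of $W$ (contractible, since $W$ is a \emph{strong} homology cobordism, so $\widetilde W$ is simply connected and $\bZ$-acyclic), glued end to end along copies of the \emph{universal abelian} covers $\widetilde X$ and $\widetilde Y$ (which are $\bZ$-acyclic, since $X$ and $Y$ are $\bZ[\bZ^{n-1}]$-homology tori), in the linear pattern $\cdots\widetilde W\cup_{\widetilde X}\widetilde W\cup_{\widetilde Y}\widetilde W\cdots$. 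A Mayer--Vietoris argument along this line, together with the compatibility of homology with direct limits, shows that $\widetilde{D(W)}$ is $\bZ$-acyclic; being simply connected it is therefore contractible. So $D(W)$ is a closed aspherical $n$-manifold with fundamental group $\bZ^{n}$, that is, a homotopy $n$-torus (and when $n=3$ it is in addition irreducible, as every closed aspherical $3$-manifold is).

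Finally, the topological classification of homotopy tori --- vanishing of the topological structure set $\mathcal{S}(T^{n})$ for $n\ge 5$ (Kirby--Siebenmann), Freedman--Quinn for $n=4$, Waldhausen/geometrization for $n=3$, and the classification of surfaces and $1$-manifolds for $n\le 2$ --- produces a homeomorphism $h\colon D(W)\to T^{n}$. Transporting the flip, $h\tau h^{-1}$ is a locally linear, orientation-reversing $C_{2}$-action on $T^{n}$ whose fixed point set is $h(\partial W)\cong X^{n-1}\cup Y^{n-1}$ and whose orbit space is homeomorphic to $W^{n}$, which is exactly the assertion. The construction is elementary and the homology bookkeeping is routine; I expect the only real obstacle to be the appeal to topological rigidity of the torus, without which the argument stops short at a homotopy torus rather than the genuine $T^{n}$.
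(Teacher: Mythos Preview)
Your proof is correct and follows exactly the paper's approach: form the double $D(W)$, equip it with the flip involution, observe that $D(W)$ is a homotopy torus, and invoke topological rigidity of tori. The paper compresses all of this into two sentences (``The double is easily seen to be a homotopy torus, hence be homeomorphic to the standard torus''), whereas you have carefully supplied the $\pi_{1}$ computation via the graph-of-groups structure and the asphericity argument via the linear decomposition of the universal cover; these details are sound and simply flesh out what the paper leaves to the reader.
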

\begin{proof}
The double of $W^{n}$ clearly admits an involution with fixed point set $X^{n-1}\cup Y^{n-1}$, and with orbit space $W^{n}$. The double is easily seen to be a homotopy torus, hence be homeomorphic to the standard torus.\end{proof}

Since the  strong (irreducible) $\bZ[\bZ^{n-1}]$-homology cobordism between two $\bZ[\bZ^{n-1}]$-homology $(n-1)$-tori is unique, by Proposition \ref{prop:type2homeomorphic} we have the following.

\begin{thm}The set of equivariant homeomorphism classes of locally linear involutions on $T^{n}$ with  non-connected, codimension-one
 fixed point sets is in one-to-one correspondence with the set of unordered pairs  $\{X^{n-1},Y^{n-1}\}$ of homeomorphism classes of $\bZ[\bZ^{n-1}]$-homology $(n-1)$-tori.\qed
\end{thm}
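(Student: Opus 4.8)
The plan is to exhibit the correspondence as the map sending a locally linear involution $\sigma$ to the unordered pair of components of its fixed point set $F=\mathrm{Fix}(\sigma)$, and to prove this map is a well-defined bijection by reducing everything to the orbit space $W^{n}=T^{n}/\sigma$. That the map is well-defined on equivariant homeomorphism classes and lands in the stated target is already contained in the earlier structural results: by the Proposition on $\bZ[\bZ^{n-1}]$-homology, each component of $F$ is a $\bZ[\bZ^{n-1}]$-homology $(n-1)$-torus, there are exactly two of them in the non-connected case, the orbit space $W^{n}$ is a strong $\bZ[\bZ^{n-1}]$-homology cobordism between them (irreducible when $n=3$, since any embedded $2$-sphere in $W^{3}$ lifts to a pair of $2$-spheres in $T^{3}$), and $T^{n}$ is recovered as the double $D(W^{n})$ glued along $\partial W^{n}=F$, with $\sigma$ acting as the interchange of the two copies. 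Equivariant homeomorphisms of involutions plainly induce homeomorphisms of fixed sets, so the map descends to equivalence classes.

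For surjectivity I would begin with an arbitrary unordered pair $\{X^{n-1},Y^{n-1}\}$ of $\bZ[\bZ^{n-1}]$-homology $(n-1)$-tori. Proposition~\ref{prop:cobordant} provides a strong $\bZ[\bZ^{n-1}]$-homology cobordism $W^{n}$ between them, which may be taken irreducible when $n=3$, and the Proposition immediately preceding the theorem then produces a $C_{2}$-action on $T^{n}$ with fixed set $X^{n-1}\cup Y^{n-1}$ and orbit space $W^{n}$. Near a fixed point this action is modeled on a reflection in a hyperplane, hence is locally linear, so every pair is realized.

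For injectivity, suppose $\sigma_{1},\sigma_{2}$ are two such involutions with the same fixed point pair, and set $W_{i}=T^{n}/\sigma_{i}$. Each $W_{i}$ is a strong $\bZ[\bZ^{n-1}]$-homology cobordism between that same pair of homology $(n-1)$-tori (irreducible when $n=3$), so Proposition~\ref{prop:type2homeomorphic} furnishes a homeomorphism $h:W_{1}\to W_{2}$, which restricts to a homeomorphism $\partial W_{1}\to\partial W_{2}$. Since $T^{n}=D(W_{i})$ with $\sigma_{i}$ the tautological swap of the two halves, the double $D(h):T^{n}\to T^{n}$ is a homeomorphism intertwining $\sigma_{1}$ and $\sigma_{2}$; note that equivariance holds regardless of whether $h$ preserves or exchanges the two boundary components. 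Thus $\sigma_{1}$ and $\sigma_{2}$ are equivariantly equivalent, and the map is injective.

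The genuinely substantive analytic and surgical content has been front-loaded into Propositions~\ref{prop:cobordant} and~\ref{prop:type2homeomorphic}, so I expect the remaining work to be organizational rather than geometric. The one point that must be handled with care is the passage between orbit spaces and involutions: one has to verify that $T^{n}$ is literally the double of $W^{n}$ with $\sigma$ acting as the canonical interchange (so that a homeomorphism of orbit spaces \emph{doubles} to an \emph{equivariant} homeomorphism of tori), and that the doubled map is again locally linear. The latter follows because local linearity of the original action forces the normal representation along $F$ to be the sign representation, giving $F$ an equivariant bicollar, and doubling a homeomorphism across such a bicollar preserves local linearity. Getting this bookkeeping right, rather than proving anything new, is the main obstacle.
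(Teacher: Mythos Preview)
Your proposal is correct and follows exactly the route the paper intends: the theorem is stated with a \qed\ immediately after the sentence ``Since the strong (irreducible) $\bZ[\bZ^{n-1}]$-homology cobordism between two $\bZ[\bZ^{n-1}]$-homology $(n-1)$-tori is unique, by Proposition~\ref{prop:type2homeomorphic} we have the following,'' so the paper treats the result as an immediate consequence of Propositions~\ref{prop:cobordant} and~\ref{prop:type2homeomorphic} together with the preceding realization Proposition, which is precisely your surjectivity/injectivity decomposition via the double of the orbit space. Your additional remarks on the equivariant bicollar and the well-definedness of $D(h)$ are more explicit than anything the paper writes, but they are the routine bookkeeping the paper tacitly assumes.
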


Similarly in the case of connected fixed point sets, we have the following.
\begin{prop}
If $X^{n-1}$ is a $\bZ[\bZ^{n-1}]$-homology $(n-1)$-torus bounding a strong (irreducible) $\bZ[\bZ^{n-1}]$-homology M\"obius band $W^{n}$, then the group $G=C_{2}$  acts on the $n$-torus $T^{n}$ with fixed point set  homeomorphic to $X^{n-1}$, and with orbit space $W^{n}$.
\end{prop}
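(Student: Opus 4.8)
The plan is to build the involution on $T^{n}$ directly from $W^{n}$, by gluing together the two ends of the orientation double cover of $W^{n}$ and carrying along the associated deck involution; the only genuine work is then to recognize the resulting closed manifold as the standard torus. First I would pass to the orientation double cover $\pi\colon\widehat{W}^{n}\to W^{n}$. Since $X^{n-1}$ is orientable, $w_{1}(W)$ vanishes on $\partial W=X^{n-1}$, so the covering is trivial over the boundary: $\partial\widehat{W}=\partial_{0}\sqcup\partial_{1}$, with $\pi$ restricting to a homeomorphism $\partial_{i}\to X^{n-1}$, and the nontrivial deck transformation $\tau$ is a free, orientation-reversing homeomorphism of $\widehat{W}$ that covers $\mathrm{id}_{W}$ and interchanges $\partial_{0}$ with $\partial_{1}$. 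By the remark following Proposition~\ref{prop:type1homeomorphic}, $\widehat{W}$ is a strong $\bZ[\bZ^{n-1}]$-homology cobordism of $X^{n-1}$ to itself; in particular $\pi_{1}(\widehat{W})\cong\bZ^{n-1}$ and $H_{*}(\widehat{W};\bZ[\bZ^{n-1}])\cong H_{*}(T^{n-1};\bZ[\bZ^{n-1}])$, whence the universal cover $\widetilde{\widehat{W}}$ is simply connected and acyclic, hence contractible.

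Next I would let $M^{n}$ be the closed manifold obtained from $\widehat{W}$ by identifying $\partial_{0}$ with $\partial_{1}$ along $\tau|_{\partial_{0}}$ (equivalently, along the two trivializations $\pi|_{\partial_{i}}$). Because $\tau$ preserves this identification, it descends to an involution $\bar\tau$ of $M$; a direct check shows that $\bar\tau$ fixes the glued-in copy $X_{*}\cong X^{n-1}$ pointwise and is free off it, so $\mathrm{Fix}(\bar\tau)=X_{*}$, and that $M/\bar\tau=W^{n}$. Choosing the boundary collar of $\widehat{W}$ to be $\tau$-equivariant, one sees that near $X_{*}$ the map $\bar\tau$ is the product of $\mathrm{id}_{X^{n-1}}$ with the reflection of the normal line, so $\bar\tau$ is locally linear. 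It thus remains only to identify $M$ with the standard $T^{n}$.

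For this I would compute $\pi_{1}(M)$ and establish asphericity. The two inclusions $\partial_{i}\cong X^{n-1}\hookrightarrow\widehat{W}$ induce the same homomorphism $\beta\colon\pi_{1}(X^{n-1})\to\pi_{1}(\widehat{W})=\bZ^{n-1}$ --- they differ by $\tau_{*}$, which is trivial since $\pi_{1}(W)=\bZ^{n-1}$ is abelian --- and $\beta$ is onto, its image being the index-two subgroup $\pi_{1}(\widehat{W})\subset\pi_{1}(W)$ coming from the M\"obius condition $H_{1}(X^{n-1})\hookrightarrow H_{1}(W)$. Hence $M$ presents as an HNN extension $\langle\,\bZ^{n-1},\,t\mid t\,\beta(g)\,t^{-1}=\beta(g)\,\rangle$ in which $t$ is central, so $\pi_{1}(M)\cong\bZ^{n-1}\times\bZ\cong\bZ^{n}$. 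For asphericity, the infinite cyclic cover of $M$ dual to $X_{*}$ is an infinite chain of copies of $\widehat{W}$ glued end-to-end along copies of $X^{n-1}$; its universal cover $\widetilde{M}$ is correspondingly an infinite chain of copies of the contractible space $\widetilde{\widehat{W}}$ glued along copies of the universal abelian cover of $X^{n-1}$, the latter being acyclic because $X^{n-1}$ is a $\bZ[\bZ^{n-1}]$-homology $(n-1)$-torus. A Mayer--Vietoris argument along the chain shows $\widetilde{M}$ is acyclic, and being simply connected it is contractible. Thus $M$ is a closed aspherical $n$-manifold with $\pi_{1}\cong\bZ^{n}$, i.e.\ a homotopy $n$-torus, and therefore $M\cong T^{n}$ by the topological classification of homotopy tori (the classical surgery computation for $n\ge5$, Freedman--Quinn \cite{FreedmanQuinn1990} for $n=4$, Waldhausen's theory or geometrization for the aspherical Haken manifold $T^{3}$ when $n=3$, and the classification of surfaces for $n\le2$). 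Transporting $\bar\tau$ across this homeomorphism gives the required locally linear involution of $T^{n}$ with fixed point set $X^{n-1}$ and orbit space $W^{n}$.

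I expect the main obstacle to be the asphericity step: one must correctly identify the covers of $X^{n-1}$ along which the pieces of $\widetilde{M}$ are glued --- verifying that they are the universal abelian covers, so that acyclicity of the homology torus $X^{n-1}$ is available --- and then organize a Mayer--Vietoris computation over an infinite chain (together with checking that $\pi_{1}$ of the infinite cyclic cover is indeed $\bZ^{n-1}$, so that $\widetilde{M}$ really is the universal cover of $M$). The remaining points --- the descent of $\bar\tau$, the computation of $\mathrm{Fix}(\bar\tau)$ and of $M/\bar\tau$, and local linearity --- are routine, because a linear collar of the fixed set is built into the construction.
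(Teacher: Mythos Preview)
Your proposal is correct and follows essentially the same construction as the paper: pass to the orientable double cover of $W^{n}$, glue the two boundary copies of $X^{n-1}$ by the deck involution to obtain a closed manifold $V^{n}$ with the induced involution, and then recognize $V^{n}$ as $T^{n}$ via the classification of homotopy tori. The paper simply asserts that ``by construction $V^{n}$ has the homotopy type of an $n$-torus,'' whereas you supply the details (the HNN computation of $\pi_{1}$ and the chain-of-contractibles asphericity argument); these are exactly the details one would expect to fill in, and your treatment of them is sound.
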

\begin{proof}
The orientable double covering of $W^{n}$ clearly admits a fixed-point-free involution interchanging two copies of $X^{n-1}$, and with orbit space $W^{n}$. Identifying the two copies of $X^{n-1}$ by the involution produces a closed manifold $V^{n}$ with involution having fixed point set $X^{n-1}$ and orbit space $W^{n}$. By construction $V^{n}$ has the homotopy type of an $n$-torus, hence is homeomorphic to the $n$-torus.
\end{proof}

\begin{thm}The set of equivariant homeomorphism classes of locally linear involutions on $T^{n}$ with connected, codimension-one
 fixed point sets is in one-to-one correspondence with the set of homeomorphism classes   $X^{n-1}$ of $\bZ[\bZ^{n-1}]$-homology $(n-1)$-tori.
\end{thm}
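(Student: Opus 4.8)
The plan is to show that the assignment sending a locally linear Type~1 involution $\tau$ on $T^{n}$ to the homeomorphism class of its fixed set $F=\mathrm{Fix}(\tau)$ is a well-defined bijection onto the set of homeomorphism classes of $\bZ[\bZ^{n-1}]$-homology $(n-1)$-tori. That the assignment is well defined and lands in this set is immediate: an equivariant homeomorphism carries fixed set to fixed set homeomorphically, and the Proposition of Section~3 (``$\bZ[\bZ^{n-1}]$-homology'') shows that a connected codimension-one fixed set is a $\bZ[\bZ^{n-1}]$-homology $(n-1)$-torus. Surjectivity is also essentially already in hand: given such a homology torus $X^{n-1}$, Proposition~\ref{prop:mobius} produces a strong $\bZ[\bZ^{n-1}]$-homology M\"obius band $W^{n}$ with $\partial W^{n}\cong X^{n-1}$, and the Proposition immediately preceding this theorem realizes $W^{n}$ as the orbit space of a locally linear $C_{2}$-action on $T^{n}$ whose fixed set is $X^{n-1}$.

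The substance is injectivity. Let $\tau_{1},\tau_{2}$ be locally linear Type~1 involutions with fixed sets $F_{1},F_{2}$ and suppose a homeomorphism $F_{1}\cong F_{2}$ is given. By the Section~3 Proposition the orbit spaces $W_{i}=T^{n}/\tau_{i}$ are strong $\bZ[\bZ^{n-1}]$-homology M\"obius bands with $\partial W_{i}=F_{i}$, and they are irreducible when $n=3$ by the Remark following that Proposition. Proposition~\ref{prop:type1homeomorphic} then furnishes a homeomorphism $h\colon W_{1}\to W_{2}$. It remains to promote $h$ to an equivariant homeomorphism $(T^{n},\tau_{1})\to(T^{n},\tau_{2})$.

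For that I would reconstruct each action canonically from its orbit space. The complement $T^{n}-F_{i}$ is, by the Remark after the Section~3 Proposition, precisely the orientation double cover of $\mathrm{int}\,W_{i}$, with deck transformation $\tau_{i}|_{T^{n}-F_{i}}$; moreover, since $T^{n}$ and (by the Orientability Lemma) $F_{i}$ are orientable, the normal line bundle of $F_{i}$ in $T^{n}$ is trivial, so local linearity makes $\tau_{i}$ the standard reflection $(x,t)\mapsto(x,-t)$ on a collar $F_{i}\times(-1,1)$ of $F_{i}$. Thus $(T^{n},\tau_{i})$ is obtained from $W_{i}$ by a canonical procedure: take the orientation double cover, attach $F_{i}\times(-1,1)$ along its two boundary collars, and let $\tau_{i}$ be the deck transformation on the cover and the reflection on the collar. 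Since a homeomorphism preserves the orientation character, $h$ lifts to a homeomorphism of orientation double covers, and this lift automatically intertwines the deck transformations because the deck group has order two; after an ambient isotopy making $h$ carry a product collar of $\partial W_{1}$ onto one of $\partial W_{2}$ by $(h|_{\partial W_{1}})\times\mathrm{id}$, the lift extends over the collars to an equivariant homeomorphism $(T^{n},\tau_{1})\to(T^{n},\tau_{2})$. This proves injectivity and completes the correspondence.

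I expect the reconstruction step just described to be the main obstacle: it is the only place where local linearity is genuinely used, being what lets the covering-space isomorphism between the complements of the fixed sets be extended, equivariantly, across the fixed sets themselves. The remaining bookkeeping---checking that $W_{i}$ really satisfies the index-two condition in the definition of a strong $\bZ[\bZ^{n-1}]$-homology M\"obius band, and that the low-dimensional cases $n\le 3$ are covered by the irreducibility hypotheses already in place---is routine given the results of the preceding sections.
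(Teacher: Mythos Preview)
Your proof is correct and follows essentially the same approach as the paper: establish well-definedness and surjectivity from the results of Sections~3--4, then for injectivity invoke Proposition~\ref{prop:type1homeomorphic} to identify the orbit spaces and reconstruct the action from the orbit space via its orientation double cover. The paper's own proof is a single sentence that takes this reconstruction step for granted; your explicit account of it (lifting $h$ to the orientation double covers and using local linearity to extend equivariantly across a reflected collar of the fixed set) is exactly what that sentence is implicitly invoking.
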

\begin{proof}
This follows from Proposition \ref{prop:type1homeomorphic}, since the orientable double covering (depending only on the corresponding M\"obius band with boundary $X^{n-1}$) then determines the action.
\end{proof}

\end{document}